\DeclareRobustCommand\ensuremath[1]{\ifmmode#1\else{$#1$}\fi}
\newtheorem{theorem}{Theorem}[section]
\newtheorem{lemma}[theorem]{Lemma}
\newtheorem{corollary}[theorem]{Corollary}
\newtheorem{example}[theorem]{Example}
\theoremstyle{definition}
\theoremstyle{remark}
\newtheorem{remark}[theorem]{Remark}
\def\id{\mathop{\rm id}\nolimits}
\newcommand{\injj}{\varinjlim}
\newcommand{\DI}{C^{\ast}(\Delta_1)}
\newcommand{\DII}{C^{\ast}(\Delta_2)}
\newcommand{\CG}{C^{\ast}(\Gamma)}
\newcommand{\CCG}[1]{C^{\ast}(\Gamma_{#1})}
\newcommand{\UDI}{\mathcal{U}(C^{\ast}(\Delta_1))}
\newcommand{\UDIi}{\mathcal{U}(C^{\ast}(\Delta_i))}
\newcommand{\UDII}{\mathcal{U}(C^{\ast}(\Delta_2))}
\newcommand{\UGI}{\mathcal{U}(C^{\ast}(\Gamma_1))}
\newcommand{\UGII}{\mathcal{U}(C^{\ast}(\Gamma_2))}
\newcommand{\UG}{\mathcal{U}(C^{\ast}(\Gamma))}
\newcommand{\C}{{\mathbb C}}
\newcommand{\A}{{\mathscr A}}
\newcommand{\UA}{\mathcal{U}(\mathscr{A})}
\newcommand{\U}{\mathcal{U}}
\newcommand{\Z}{{\mathbb Z}}
\newcommand{\Q}{{\mathbb Q}}
\newcommand{\R}{{\mathbb R}}
\newcommand{\T}{{\mathbb T\,}}
\newcommand{\torus}{\T}
\newcommand{\G}{\Gamma}
\newcommand{\Lwedge}{\text{\Large $\wedge$}}
\begin{document}
\begin{abstract}
    We study to what extent      group $C^\ast$-algebras are
    characterized by their  unitary groups. A complete characterization
    of which  Abelian group $C^\ast$-algebras have isomorphic unitary
    groups is obtained. We compare these results with other
    unitary-related invariants of $C^\ast(\Gamma)$, such as the
    $K$-theoretic $K_1(C^\ast(\Gamma))$ and find that
    $C^\ast$-algebras of nonisomorphic torsion-free Abelian groups may have
    isomorphic $K_1$-groups,  in sharp contrast with the well-known fact
    that $C^\ast(\Gamma)$ (even $\Gamma$) is characterized by the
    topological group structure of its unitary group when $\Gamma $ is
    torsion-free and Abelian.
\end{abstract}
\date{\today}
\title[Characterizing  group $C^\ast$-algebras through their unitary groups]
{Characterizing  group $C^\ast$-algebras through their unitary
groups: the Abelian case}\thanks{Research  partly supported by the
Spanish Ministry of Science (including FEDER funds), grant
MTM2004-07665-C02-01 and Generalitat Valenciana GV/2007/012.}
\author{Jorge Galindo and Ana Mar{\'\i}a  R{\'o}denas}
\address{Departamento de Matem\'aticas \\
         Universidad Jaume I \\
        Campus Riu Sec, 12071\\
        Castell\'on\\
        Spain\\}
        \email{jgalindo@mat.uji.es}
       \hfill \email{arodenas@mat.uji.es}
       \keywords{group $C^\ast$-algebra, unitary group, topological group, $K_1$-group, exterior product}
        \subjclass[2000]{19L99, 22D15, 22D25, 43A40, 46L05, 46L80}
 \maketitle
\section{Introduction}
The index theorem states that  every continuous $f\colon \T\to \T$
is homotopic to the function $t\mapsto t^n$ for some $n\in \Z$ (its
winding number). As a consequence the quotient of the unitary group
of $C^\ast(\Z)$ by its connected component  is isomorphic to $\Z$.
This identification can be extended in a functorial fashion to
finitely generated Abelian groups and their  inductive limits. Since
every torsion-free Abelian group is an inductive limit of finitely
generated groups,  the following  theorem, that we take as the
departing point of our paper, follows.
\begin{theorem}[see Theorem 8.57  of \cite{hoffmorr}]\label{index} If $\Gamma$ is a
torsion-free Abelian group the quotient $\mathcal{U}/\mathcal{U}_0$
of the unitary group $\mathcal{U}=\mathcal{U}(C^\ast(\Gamma))$ by
its connected component $\mathcal{U}_0$ is isomorphic to $\Gamma$.
Hence, two torsion-free Abelian groups $\Gamma_1$ and $\Gamma_2$
with topologically isomorphic unitary groups
$\mathcal{U}(C^\ast(\Gamma_1))$ and $\mathcal{U}(C^\ast(\Gamma_2))$
must already be isomorphic.
\end{theorem}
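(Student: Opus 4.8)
The plan is to realize the unitary group concretely and reduce to a cohomological computation. Since $\Gamma$ is Abelian, the Gelfand transform identifies $C^\ast(\Gamma)$ with $C(\widehat\Gamma)$, the algebra of continuous $\C$-valued functions on the compact Pontryagin dual $\widehat\Gamma$; under this identification the unitary group $\mathcal U$ becomes $C(\widehat\Gamma,\T)$, the group of continuous maps $\widehat\Gamma\to\T$ under pointwise multiplication and the uniform topology. First I would pin down the identity component $\mathcal U_0$: the exponential map $h\mapsto e^{2\pi i h}$ sends $C(\widehat\Gamma,\R)$ onto the subgroup of null-homotopic maps, which is path-connected, open (everything uniformly close to $1$ has a continuous logarithm and so lies in its image) and therefore closed; hence it is exactly $\mathcal U_0$. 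Consequently $\mathcal U/\mathcal U_0$ is the group $[\widehat\Gamma,\T]$ of homotopy classes of maps into $\T$, which---because $\T$ is an Eilenberg--MacLane space $K(\Z,1)$---is the first \v{C}ech cohomology group $\check H^1(\widehat\Gamma;\Z)$.

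The remaining task is the purely dual-theoretic identity $\check H^1(\widehat\Gamma;\Z)\cong\Gamma$. I would obtain it by a limit argument mirroring the one sketched in the introduction. Write $\Gamma=\injj \Gamma_i$ as the direct limit of its finitely generated subgroups; since $\Gamma$ is torsion-free each $\Gamma_i$ is free, say $\Gamma_i\cong\Z^{n_i}$. Pontryagin duality converts this into an inverse limit of tori, $\widehat\Gamma\cong\varprojlim \widehat{\Gamma_i}$ with $\widehat{\Gamma_i}\cong\T^{n_i}$. The base case is the index theorem in its multivariable form: $\check H^1(\T^{n};\Z)\cong\Z^{n}$, with the winding numbers in each coordinate furnishing the isomorphism, so $\check H^1(\widehat{\Gamma_i};\Z)\cong\Gamma_i$ naturally in $i$.

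The key step---and the one I expect to be the main obstacle---is the continuity of \v{C}ech cohomology: for an inverse limit of compact spaces one has $\check H^1(\varprojlim \widehat{\Gamma_i};\Z)\cong\injj \check H^1(\widehat{\Gamma_i};\Z)$. Granting this, $\mathcal U/\mathcal U_0\cong\check H^1(\widehat\Gamma;\Z)\cong\injj\check H^1(\widehat{\Gamma_i};\Z)\cong\injj\Gamma_i\cong\Gamma$, which is the first assertion. Care is needed to check that all the identifications are compatible with the connecting maps so that the isomorphism is genuinely functorial in $i$; this compatibility is what makes the limit step legitimate and is the delicate point to verify, rather than the formal bookkeeping.

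Finally, the ``Hence'' clause is immediate once the first part is in hand. The identity component $\mathcal U_0$ of a topological group is a topological invariant, so any topological isomorphism $\mathcal U(C^\ast(\Gamma_1))\cong\mathcal U(C^\ast(\Gamma_2))$ descends to an isomorphism of the quotients $\mathcal U/\mathcal U_0$. Applying the first part to each side yields $\Gamma_1\cong\mathcal U(C^\ast(\Gamma_1))/\mathcal U_0\cong\mathcal U(C^\ast(\Gamma_2))/\mathcal U_0\cong\Gamma_2$, as required.
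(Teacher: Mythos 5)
Your proposal is correct and follows essentially the same route the paper takes (the paper only sketches it, deferring to Theorem 8.57 of Hofmann--Morris): identify $\mathcal U(C^\ast(\Gamma))$ with $C(\widehat\Gamma,\T)$, recognize $\mathcal U_0$ as the exponential image so that $\mathcal U/\mathcal U_0\cong[\widehat\Gamma,\T]$, and pass from the index theorem on $\T^n$ to the general case via the duality between $\Gamma=\varinjlim\Gamma_i$ and $\widehat\Gamma=\varprojlim\widehat{\Gamma_i}$. Your use of $\check H^1(\,\cdot\,;\Z)$ and its continuity under inverse limits of compact spaces is exactly the standard way to make the paper's ``taking projective limits'' step rigorous, and you correctly single out naturality of the identifications as the point requiring care.
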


Theorem \ref{index} suggests the usage of $\UG$ as an invariant for
$\CG$. To determine its strength it is necessary to know  to what
extent the topological group structure of $\UG$ determines  $\CG$.
As a first step in this direction, we devote Section \ref{charac} to
characterize when two Abelian groups $\Gamma_1$ and $\Gamma_2$ have
isomorphic unitary groups.  The groups
$\mathcal{U}(C^\ast(\Gamma_1))$ and $\mathcal{U}(C^\ast(\Gamma_2))$
are shown to be topologically isomorphic if and only if
$|\Gamma_1/t(\Gamma_1)|=|\Gamma_2/t(\Gamma_2)|=:\alpha$ and
$\oplus_\alpha\Gamma_1/t(\Gamma_1)$ is group-isomorphic to
$\oplus_\alpha\Gamma_2/t(\Gamma_2)$, where $t(\Gamma_i)$ stands for
the torsion subgroup of $\Gamma_i$.

 Another unitary-related invariant of
$C^\ast(\Gamma)$ of great importance is the $K_1$-group,
$K_1(C^\ast(\Gamma))$. Since $K_1(C^\ast(\Z^m))=\Z^{2^{m-1}}$, two
torsion-free finitely  generated Abelian groups are isomorphic
whenever their $K_1$-groups are. The way this fact is proved does
not however allow a functorial extension to inductive limits and,
indeed, we construct in Section \ref{sec:counter}  two nonisomorphic
torsion-free Abelian groups $\Gamma_1$ and $\Gamma_2$ with
isomorphic $K_1$-groups, thereby showing that Theorem \ref{index} is
not valid for $K_1$-groups instead of unitary groups.  We find
therefore that $\mathcal{U}(C^\ast(\Gamma))$ is a stronger invariant
than $K_1(C^\ast(\Gamma))$, for torsion-free Abelian groups. For
general (even Abelian) groups this is no longer true,
$K_1(C^\ast(\Gamma))$ distinguishes between groups with different
finitely generated torsion-free quotients, while $\UG$ need not, see
Section \ref{concl}.

%We find therefore that $\mathcal{U}(C^\ast(\Gamma))$ is strictly
%stronger that $K_1(C^\ast(\Gamma))$ as an invariant of
%$C^\ast(\Gamma)$, see Section \ref{comp}.

%With the above ideas as motivation
\section{Background}
This paper is  concerned with group $C^\ast$-algebras. The
$C^\ast$-algebra $C^\ast(\Gamma)$  of a group $\Gamma$ is defined as
the enveloping $C^{\ast}$-algebra of the convolution algebra
$L^1(\Gamma)$ and, as such, encodes the representation theory of
$\Gamma$,  see \cite[Paragraph 13]{dixm}.

We analyze   in this paper  to what extent a group $\Gamma$, or
rather the $C^\ast$-algebra structure of $C^\ast(\Gamma)$, is
determined by the topological  group structure of
$\U(C^\ast(\Gamma)\,)$.

The unitary groups $\U(C^\ast(\Gamma))$ are obviously related to
another invariant of $C^\ast(\Gamma)$ of greater importance, the
$K_1$-group of $K$-theory. $K$-theory for $C^{\ast}$-algebras is
based on two functors, namely, $K_0$ and $K_1$, which associate to
every $C^\ast$-algebra $\A$, two Abelian groups $K_0(\A)$ and
$K_1(\A)$. The group  $K_1(\A)$ is in particular defined by
identifying unitary elements of \emph{matrix algebras} over $\A$. It
is allowing matrices over  $\A$ (instead of elements of $\A$) that
makes $K_1$-groups Abelian. If $\Gamma$ is a discrete group, there
is a natural embedding of $\Gamma$ in $\UG$, this may be composed
with the canonical map $\omega \colon \UG\to K_1(\CG)$. $K_1(\CG)$
being Abelian, the resulting homomorphism factors through the
Abelianization of $\Gamma$, yielding a homomorphism   $\kappa_\Gamma
\colon \Gamma/\Gamma^\prime \to K_1(\CG)$. $\kappa_\G$  was shown to
be rationally injective in \cite{ellinats}, see also
\cite{bettvale}.

Now and for the rest of the paper we restrict  our attention to
discrete Abelian groups.  When $\Gamma$ is a discrete Abelian group,
$C^\ast(\Gamma)$ is a commutative $C^\ast$-algebra with spectrum
homeomorphic to the compact group $\widehat{\Gamma}$, the group of
characters of $\Gamma$. We may thus identify $C^\ast(\Gamma)$ with
the algebra of continuous functions
 $C(\widehat{\Gamma},\C)$ and the   Gelfand transform
 coincides with the Fourier transform. The unitary group
 $\U(C^\ast(\Gamma))$ can therefore be identified with the
 topological group of $\T$-valued functions
 $C(\widehat{\Gamma},\T)$. Hence relating $\UG$ to $\G$     amounts  in this case to
relating   $\Gamma$ to  $C(\widehat{\Gamma},\T)$.

Also, for commutative $\A$ (as is the case with $\CG$, with $\G$
Abelian), the determinant map $\Delta \colon K_1(\A)\to \UA/\UA_0$
is a \emph{right inverse} of the canonical map $\omega \colon
\UA/\UA_0\to K_1(\A)$ (see \cite[Section 8.3]{rordlarslaus}) and the
link between $K_1(\A)$ and $\U(\A)$ is stronger.

The commonly used  notation $K_\ast(\A)=K_1(\A)\oplus K_0(\A)$ will
also be adopted in this paper.

\textbf{Acknowledgement:} We would like to thank Pierre de la Harpe
for some suggestions and references  that helped us to improve the
exposition  of this paper.
\section{A torsion-free Abelian group $\Gamma$ not determined by
$K_1(C^\ast(\Gamma))$}\label{sec:counter} As stated in the
introduction, there is a group isomorphism $\mathrm{In}\colon
C(\T,\T)/C(\T,\T)_0 \to \Z$ assigning to every $f\in C(\T,\T)$ its
winding number. In other words,  every element of $C(\T,\T)$ is
homotopic to exactly one character of $\T$. This point of view can
be carried over to $\T^n$ and then, taking projective limits,  to
every compact connected Abelian group, ultimately leading to Theorem
\ref{index}, after identifying  $C(\widehat{\Gamma},\T)$  with
$\U(C^\ast(\Gamma))$.

Despite the strong relation between $\U(C^\ast(\Gamma))$ and
$K_1(C^\ast(\Gamma))$ we construct in this section two nonisomorphic
torsion-free Abelian groups $\Gamma_1$ and $\Gamma_2$ with
$K_1(\U(C^\ast(\Gamma_1)))$ isomorphic to
$K_1(\U(C^\ast(\Gamma_2)))$.

\subsection{The structure of $K_1(C^\ast(\Gamma))$ for torsion-free
Abelian $\Gamma$} A countable torsion-free Abelian group $\Gamma$
can always be obtained as the inductive limit  of torsion-free
finitely generated  Abelian groups. Simply enumerate
$\Gamma=\{\gamma_n\colon n<\omega\}$, define $\Gamma_n =\langle
\gamma_j \colon 1\leq j\leq n\rangle$ and let $\phi_n\colon \Gamma_n
\to \Gamma_{n+1}$ define the inclusion mapping, then
$\Gamma=\varinjlim (\Gamma_n,\phi_{n})$. Each homomorphism $\phi_n$
then induces a morphism of $C^\ast$-algebras $\phi_n^\ast \colon
C^\ast(\Gamma_n)\to C^\ast(\Gamma_{n+1})$, and $C^\ast(\Gamma)=
\varinjlim (C^\ast(\Gamma_n),\phi^\ast_{n})$

The functor $K_1$ commutes with inductive limits, see for instance
\cite{rordlarslaus}.  If $K_1(\phi_n)\colon
K_1(C^{\ast}(\Gamma_{n}))\to K_1(C^{\ast}(\Gamma_{n+1}))$ denotes
the homomorphism  induced by the morphism $\phi_n^\ast$ , we have
\[ K_1(C^\ast(\Gamma)) =\injj
(K_1(C^{\ast}(\Gamma_{n})),K_1(\phi_n)).\]

The groups $\Gamma_n$  in the above discussion are all isomorphic to
$\Z^{k(n)}$, for suitable $k(n)$, and it is well-known that
$K_\ast(C^\ast(\Z^k))$ is isomorphic to the exterior product
$\Lwedge \Z^k$. Since this realization of  $K_1(C^\ast(\Gamma))$
through  exterior products will be essential in determining our
examples, we next recall  some basic facts about them.

The $k$-th exterior, or wedge, product $\Lwedge^{k}(\Z^n)$ of a
finitely generated group $\Z^n$ with free generators $e_1,\ldots,
e_n$ is isomorphic to the free Abelian group generated by $\bigl\{
e_{i_1}\Lwedge \cdots \Lwedge e_{i_k}\colon
\{i_1,\ldots,i_k\}\subset \{1,\ldots,n\}\bigr\}$. A group
homomorphism $\phi\colon \Z^n \to \Z^m$  induces a group
homomorphism $\Lwedge^k(\phi)\colon \Lwedge^k(\Z^n)\to
\Lwedge^k(\Z^m)$ in the obvious way
$\Lwedge^k(\phi)(e_{i_1}\Lwedge\cdots \Lwedge
e_{i_k})=\phi(e_{i_1})\Lwedge\cdots \Lwedge \phi(e_{i_k})$. If
$\Gamma=\injj(\Gamma_i,h_i)$ is a direct limit, $\Lwedge^k (\Gamma)$
can be obtained as
$\Lwedge^k(\Gamma)=\injj(\Lwedge^k(\Gamma_i),\Lwedge^k(h_i))$. Other
elementary properties of exterior products are best understood
taking into account that $\Lwedge \Gamma$ is isomorphic to the
quotient of $\bigotimes \Gamma$ by the two-sided ideal    $N$
spanned by tensors of the form $g\otimes g$. The reference
\cite{bouralg} is a classical one concerning exterior products.

The following result is well-known (\cite{bren,elli}), we supply a
proof for the reader's convenience.
\begin{lemma}[\cite{elli}, Paragraph 2.1]\label{exter} Let $\Gamma $ be a
torsion-free discrete Abelian group.
 Then
\[
K_1(C^{\ast}(\Gamma))\cong \Lwedge_{\text{\tiny odd}}\, \Gamma
:=\bigoplus_{j=0}^{\infty} \Lwedge^{2j+1}\Gamma.
\]
\label{kgrupo-ext}
\end{lemma}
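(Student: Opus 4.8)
The plan is to reduce to the finitely generated case and then pass to the limit, exploiting that $K_1$, each exterior power $\Lwedge^k$, and arbitrary direct sums all commute with inductive limits. Writing $\Gamma=\injj(\Gamma_i,h_i)$ as the directed union of its finitely generated subgroups---each of which, being finitely generated and torsion-free, is isomorphic to some $\Z^{k(i)}$---it suffices to establish the isomorphism for $\Gamma=\Z^k$ and to check that it is \emph{natural} with respect to the maps induced by arbitrary group homomorphisms $\Z^n\to\Z^m$. Granting this, functoriality gives
\[
K_1(\CG)=\injj K_1(C^\ast(\Gamma_i))\cong\injj \Lwedge_{\text{\tiny odd}}\Gamma_i=\injj\bigoplus_{j}\Lwedge^{2j+1}\Gamma_i\cong\bigoplus_j\injj\Lwedge^{2j+1}\Gamma_i=\bigoplus_j\Lwedge^{2j+1}\Gamma=\Lwedge_{\text{\tiny odd}}\Gamma,
\]
where the last two equalities use, respectively, that exterior powers and direct sums commute with direct limits.

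For the base case I would use the identification $C^\ast(\Z^k)\cong C(\T^k)$ and compute $K_\ast(C(\T^k))$ by induction on $k$ via the Künneth theorem for topological $K$-theory (Schochet), $C(\T^k)$ lying in the bootstrap class. Since $K_\ast(C(\T))=\Z[1]\oplus\Z[z]$, with the class of the identity in degree $0$ and the class of the coordinate unitary $z$ in degree $1$, and since every group in sight is free (so the Tor terms vanish), the Künneth isomorphism yields $K_\ast(C(\T^{k+1}))\cong K_\ast(C(\T^k))\otimes K_\ast(C(\T))$ as $\Z/2$-graded groups. Matching this against the standard graded-algebra isomorphism $\Lwedge\Z^{k+1}\cong\Lwedge\Z^k\otimes\Lwedge\Z$, the induction shows $K_\ast(C(\T^k))\cong\Lwedge\Z^k$ with the even part corresponding to $K_0$ and the odd part to $K_1$; in particular $K_1(C^\ast(\Z^k))\cong\Lwedge_{\text{\tiny odd}}\Z^k$.

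To make this explicit and, crucially, natural, I would describe the isomorphism on generators: the free generator $e_i$ of $\Z^k\cong\Lwedge^1\Z^k$ is sent to the $K_1$-class of the coordinate unitary $z_i\in C(\T^k,\T)=\U(\CG)$, that is, to the image of $e_i$ under the canonical embedding $\Gamma\hookrightarrow\U(\CG)\lra K_1(\CG)$. This extends multiplicatively: the external product makes $K_\ast(C(\T^k))$ a $\Z/2$-graded ring, and the universal property of the exterior algebra produces a ring homomorphism $\Lwedge\Z^k\to K_\ast(C(\T^k))$, which is precisely the isomorphism above. Naturality is then immediate on generators---a homomorphism $\phi\colon\Z^n\to\Z^m$ sends $e_i$ to $\sum_j a_{ji}e_j$, while $K_1(\phi)$ sends $[z_i]$ to $\sum_j a_{ji}[z_j]$---and it propagates to every exterior degree because the $K$-theoretic external product is natural.

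I expect the main obstacle to be exactly this naturality of the base-case isomorphism with respect to arbitrary (not merely inclusion-type) homomorphisms, together with the bookkeeping needed to verify that the $K$-theory product corresponds to the wedge product in every degree. Without this compatibility the termwise isomorphisms $K_1(C^\ast(\Gamma_i))\cong\Lwedge_{\text{\tiny odd}}\Gamma_i$ would not assemble into an isomorphism of the two direct systems, and the passage to the limit carried out in the first display would break down.
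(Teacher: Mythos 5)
Your argument is essentially the paper's own: both proofs establish the graded ring isomorphism $\Lwedge\Z^k\cong K_\ast(C^\ast(\Z^k))$ determined by sending generators to the classes of the coordinate unitaries, use the $\Z_2$-grading to extract the odd part as $K_1$, and pass to the inductive limit using that $K_1$ and exterior powers commute with direct limits, with naturality guaranteed by the characterization of the isomorphism on generators. The only difference is that you unpack the base case via K\"unneth and the universal property of the exterior algebra, where the paper simply cites the existence and uniqueness of this ring isomorphism; your version is correct and, if anything, more self-contained.
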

\begin{proof}
Recall in first place that there is a unique ring isomorphism
$R\colon \Lwedge \Z^n \to K_\ast(C^\ast(\Z^n))$ respecting the
canonical embeddings of $\Z^n$ in both  $K_\ast(C^\ast(\Z^n))$ and
$\Lwedge \Z^{n}$. Since
$K_\ast(C^\ast(\Z^n))=K_0(C^\ast(\Z^n))\oplus K_1(C^\ast(\Z^n))$ and
the ring structure $K_\ast(C^\ast(\Z^n))$ is $\Z_2$-graded (which
means that $x \in K_i(C^\ast(\Z^n))$, $y \in K_j(C^\ast(\Z^n))$
implies that $xy \in K_{i+j}(C^\ast(\Z^n))$ with $i,j\in \Z_2$), we
have that the isomorphism $R$ maps $\Lwedge_{\text{\tiny odd}}\Z^n$
onto $K_1(C^\ast(\Z^n))$.

Now put $\Gamma=\injj(\Gamma_n,\phi_n)$ with $\Gamma_n\cong
\Z^{j_n}$. The uniqueness of the above mentioned ring-isomorphism,
together with the fact that wedge products commute with direct
limits implies that $K_1(C^\ast(\Gamma))$ is isomorphic to
${\displaystyle  \Lwedge_{\text{odd}} \Gamma}$.
\end{proof}
Since the groups $\Gamma_n$ are always isomorphic to $\Z^{k(n)}$ a
comparison between $\Gamma$ and $K_1(C^\ast(\Gamma))$ turns into a
comparison of two inductive limits, $\injj(\Z^{k(n)},\phi_n)$ and
$\injj(\Z^{2^{k(n)-1}},K_1(\phi_n))$. When $\Gamma$ has finite rank
$m$ it may be assumed without loss of generality that $k(n)=m$ for
all $n$. If in addition $m\leq 2$, it is easy to see (cf. Lemma
\ref{lem:desc}) that $K_1(\phi_n)=\phi_n$. We have thus:
\begin{corollary}\label{rank2}
If $\Gamma$ is a torsion-free Abelian group of rank $\leq 2$, then
$K_1(C^\ast(\Gamma)) $ is isomorphic to $\Gamma$.
\end{corollary}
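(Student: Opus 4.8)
The plan is to invoke Lemma \ref{exter} and feed it into a purely algebraic computation of exterior powers, then use the low rank to annihilate all but the first summand. By Lemma \ref{exter} we have $K_1(C^\ast(\Gamma)) \cong \Lwedge_{\text{odd}}\,\Gamma = \bigoplus_{j=0}^{\infty} \Lwedge^{2j+1}\Gamma$, so it suffices to show that this direct sum collapses to $\Lwedge^{1}\Gamma$ when $\Gamma$ has rank at most $2$.

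First I would fix a presentation $\Gamma = \injj(\Gamma_n,\phi_n)$ with each $\Gamma_n \cong \Z^m$ and $m \leq 2$ equal to the rank, which is legitimate by the remark preceding the statement. Since wedge products commute with direct limits, $\Lwedge^{k}\Gamma = \injj(\Lwedge^{k}\Z^m, \Lwedge^{k}\phi_n)$. For $k > m$ there are no $k$-element subsets of $\{1,\dots,m\}$, so the explicit description of $\Lwedge^{k}\Z^m$ recalled above shows that this group is trivial, and hence $\Lwedge^{k}\Gamma = 0$. In particular every odd index $2j+1 \geq 3$ exceeds $m \leq 2$, so $\Lwedge^{2j+1}\Gamma = 0$ for all $j \geq 1$. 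The only surviving summand is thus $\Lwedge^{1}\Gamma$, which is canonically $\Gamma$, giving $K_1(C^\ast(\Gamma)) \cong \Gamma$.

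I do not expect a serious obstacle once Lemma \ref{exter} is granted; the argument is essentially the vanishing of high exterior powers of a low-rank group. The one point deserving care is the compatibility of the connecting maps: under the identification $K_1(C^\ast(\Z^m)) \cong \Lwedge_{\text{odd}}\Z^m$ one must check that the induced map $K_1(\phi_n)$ restricts on the degree-one part to $\phi_n$ itself, so that no higher-degree data survive in the limit. This is the content of the forthcoming Lemma \ref{lem:desc}, and it is precisely here that the hypothesis $m \leq 2$, equivalently $2^{m-1} = m$, enters: for rank $m \geq 3$ the odd exterior algebra contributes strictly more than $\Lwedge^{1}$ (e.g.\ $\Lwedge^{3}\Z^3 \neq 0$), and the isomorphism with $\Gamma$ breaks down.
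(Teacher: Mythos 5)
Your proposal is correct and follows the same route as the paper: both rest on Lemma \ref{exter} together with the observation that for rank $m\leq 2$ the higher odd exterior powers vanish (equivalently, $2^{m-1}=m$ and $K_1(\phi_n)=\phi_n$), so that only the degree-one part $\Lwedge^{1}\Gamma\cong\Gamma$ survives in the inductive limit. Your phrasing via the vanishing of $\Lwedge^{2j+1}\Gamma$ for $j\geq 1$ is just a cleaner packaging of the paper's comparison of the two inductive systems.
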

Corollary \ref{rank2} shows that two nonisomorphic torsion-free
Abelian groups $\Gamma_i$ with $K_1(C^\ast(\Gamma_1))$ isomorphic to
$K_1(C^\ast(\Gamma_2))$ must have rank larger than 2.  For our
counterexample we  will deal with two groups of rank 4. If $\Gamma$
is such a group, then $K_1(C^\ast(\Gamma))$ is isomorphic to
$\Lwedge^1(\Gamma) \oplus \Lwedge^3(\Gamma)$. Our  selection  of the
examples is  determined by the following theorem of Fuchs and
Loonstra.
\begin{theorem}[Particular case of  Theorem 90.3 of \cite{fuchs2}]\label{teo-fuchs}
There are two nonisomorphic groups  $\Gamma_1$ and $\Gamma_2$, both
of rank 2, such that
\[
\Gamma_1\oplus \Gamma_1\cong \Gamma_2\oplus \Gamma_2.\]
\end{theorem}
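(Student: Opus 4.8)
The plan is to realise $\Gamma_1$ and $\Gamma_2$ as rank-one modules over an order in an imaginary quadratic field, so that a statement about abelian groups becomes a statement about the ideal class group of the order. Fix, say, $K=\Q(\sqrt{-5})$ with ring of integers $R=\Z[\sqrt{-5}]$, an order of $\Z$-rank $2$ whose class group has order $2$. Every rank-one torsion-free $R$-module inside $K$ is then a rank-two torsion-free abelian group, exactly the kind of object required, and I would look for the two groups among these modules: their behaviour under $\oplus$ is governed by multiplication of classes in $\mathrm{Pic}(R)$, while their underlying abelian structure is governed by their local type data.

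For the doubling isomorphism I would use the Steinitz/Serre splitting theory for projective modules over $R$. Since $R$ has Krull dimension $1$, any rank-two projective $R$-module $P$ satisfies $P\cong R\oplus\det P$; applied to $P=M\oplus M$ this gives the chain
\[
M\oplus M\;\cong\;R\oplus M^{2}\;\cong\;R\oplus N^{2}\;\cong\;N\oplus N
\]
of $R$-module isomorphisms, valid as soon as $M^{2}\cong N^{2}$, i.e. $[M]^{2}=[N]^{2}$ in $\mathrm{Pic}(R)$. Taking $M$ non-principal with $[M]$ of order $2$ and $N=R$, the classes satisfy $[M]^{2}=[N]^{2}=1$ but $[M]\neq[N]$, so the displayed module isomorphism (in particular an isomorphism of abelian groups) yields $\Gamma_1\oplus\Gamma_1\cong\Gamma_2\oplus\Gamma_2$ with $\Gamma_1=M$ and $\Gamma_2=N$.

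The hard part is to secure $\Gamma_1\not\cong\Gamma_2$ as \emph{abstract} abelian groups, rather than merely as $R$-modules. This is impossible in the finitely generated setting just used: a finitely generated torsion-free abelian group of rank $2$ is free, so $M$ and $R$ are both isomorphic to $\Z^{2}$ and the module-theoretic distinction evaporates. The resolution, which is the real content of Theorem 90.3 of \cite{fuchs2}, is to replace the honest ideals by non-finitely generated rank-one $R$-submodules of $K$, prescribing the localizations $M_{(p)}$ prime by prime so that the isomorphism type of the bare abelian group faithfully records the underlying class, while still arranging the order-two relation that drives the doubling. One then separates $\Gamma_1$ from $\Gamma_2$ through the Fuchs--Loonstra invariants of rank-two groups --- the $p$-adic coordinate matrices, equivalently the types of the rank-one subgroups and quotients together with their linking --- by checking that no single $GL_2(\Q)$ change of coordinates can match all the prescribed local data at once. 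Carrying out this verification, while preserving the class relation of the previous paragraph, is where I would expect the genuine difficulty to lie.
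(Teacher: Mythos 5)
The paper does not actually prove this statement: it is imported verbatim as a particular case of Theorem 90.3 of \cite{fuchs2} (the Fuchs--Loonstra construction), where the groups are built directly as subgroups of $\Q\oplus\Q$ with prescribed divisibility data at a sequence of primes, and both the isomorphism $\Gamma_1\oplus\Gamma_1\cong\Gamma_2\oplus\Gamma_2$ and the non-isomorphism $\Gamma_1\not\cong\Gamma_2$ are checked by explicit matrix computations with those data. Your class-group strategy is a genuinely different route and is, in principle, a viable one (it is essentially how Arnold's near-isomorphism theory explains such examples), but as written it does not prove the theorem. Everything you actually establish --- the Steinitz splitting giving $M\oplus M\cong R\oplus R$ for a non-principal ideal $M$ whose class has order two in $\mathrm{Pic}(\Z[\sqrt{-5}])$ --- takes place in the finitely generated world, where, as you yourself concede, $M\cong R\cong\Z^2$ as abelian groups and the conclusion is vacuous. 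The entire content of the theorem is then deferred to your last paragraph, which is a plan rather than an argument.

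The gap is not merely expository, because the most natural way to execute that plan fails. If you pass to non-finitely generated modules by inverting an infinite set $S$ of rational primes (chosen inert so that $\mathrm{Pic}$ survives), then $M$ and $R$ become finitely generated torsion-free modules over the principal ideal domain $\Z_S$ obtained from $\Z$ by inverting $S$; they are therefore free $\Z_S$-modules of rank $2$ and hence isomorphic as abelian groups, and indeed $\mathrm{End}_{\Z}(R_S)=M_2(\Z_S)$ strictly contains $R_S$, so the $R$-module structure leaves no trace on the underlying group. To escape this you must prescribe divisibility asymmetrically at the two primes of $R$ lying over a split rational prime (or impose infinite heights), but then the resulting modules are no longer projective over any localization of $R$, and the Steinitz isomorphism $P\cong R\oplus\det P$ on which your second paragraph rests is no longer available. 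Thus both halves of your argument --- the doubling isomorphism and the non-isomorphism of the summands --- have to be rebuilt from scratch in the non-finitely-generated setting, and that reconstruction, which amounts to the Fuchs--Loonstra computation with $p$-adic coordinate matrices, is precisely what is missing.
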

We then have:
\begin{theorem}\label{counter}
Let $\Gamma_1,\Gamma_2$ be the groups of Theorem \ref{teo-fuchs} and
define the 4-rank groups, $\Delta_i=\Z\oplus \Z \oplus \Gamma_i$.
Then $K_1(C^\ast(\Delta_1))$ and  $K_1(C^\ast(\Delta_2))$ are
isomorphic, while $\Delta_1$ and $\Delta_2$ are not.
\end{theorem}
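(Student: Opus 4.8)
The plan is to compute $K_1(\DIi)$ explicitly from the exterior-power description of Lemma \ref{exter}, read off directly that the two groups coincide, and then treat the non-isomorphism $\Delta_1\not\cong\Delta_2$ separately by cancelling the free summand $\Z^2$. By Lemma \ref{exter}, $K_1(\DIi)\cong\bigoplus_{j\geq 0}\Lwedge^{2j+1}\Delta_i$. Since $\Delta_i$ is torsion-free of rank $4$, I would first observe that $\Lwedge^{k}\Delta_i$ is torsion-free and vanishes for $k>4$: realizing $\Delta_i$ as a direct limit of copies of $\Z^{4}$ with injective connecting maps and using that wedge products commute with direct limits, $\Lwedge^{k}\Delta_i$ is a direct limit of the free groups $\Lwedge^{k}\Z^{4}$ and so is torsion-free, while tensoring with $\Q$ shows it has rank $\binom{4}{k}$, which is $0$ for $k>4$. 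Hence only $j=0,1$ contribute and $K_1(\DIi)\cong\Delta_i\oplus\Lwedge^{3}\Delta_i$.

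Next I would expand $\Lwedge^{3}\Delta_i$ through the standard decomposition $\Lwedge^{k}(A\oplus B)\cong\bigoplus_{p+q=k}\Lwedge^{p}A\otimes\Lwedge^{q}B$ (see \cite{bouralg}) applied to $\Delta_i=\Z^{2}\oplus\Gamma_i$. As $\Lwedge^{p}\Z^{2}=0$ for $p\geq 3$ and $\Lwedge^{q}\Gamma_i=0$ for $q\geq 3$, only the summands with $(p,q)=(2,1)$ and $(1,2)$ survive, so that
\[
\Lwedge^{3}\Delta_i\cong\bigl(\Lwedge^{2}\Z^{2}\otimes\Gamma_i\bigr)\oplus\bigl(\Z^{2}\otimes\Lwedge^{2}\Gamma_i\bigr)\cong\Gamma_i\oplus(\Lwedge^{2}\Gamma_i)^{2}.
\]
Adding the term $\Lwedge^{1}\Delta_i=\Z^{2}\oplus\Gamma_i$ gives the closed form
\[
K_1(\DIi)\cong\Z^{2}\oplus\Gamma_i\oplus\Gamma_i\oplus(\Lwedge^{2}\Gamma_i)^{2}.
\]

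It now suffices to match the three non-free blocks for $i=1,2$, and here I would use the hypothesis $\Gamma_1\oplus\Gamma_1\cong\Gamma_2\oplus\Gamma_2$ of Theorem \ref{teo-fuchs} twice. It yields $\Gamma_1^{2}\cong\Gamma_2^{2}$ outright, taking care of the middle summands. For the last summand, applying $\Lwedge^{4}$ to the isomorphism $\Gamma_1^{2}\cong\Gamma_2^{2}$ and using once more that $\Lwedge^{p}\Gamma_i=0$ for $p\geq 3$, the only surviving term of $\Lwedge^{4}(\Gamma_i\oplus\Gamma_i)$ is $\Lwedge^{2}\Gamma_i\otimes\Lwedge^{2}\Gamma_i$, whence $(\Lwedge^{2}\Gamma_1)^{\otimes 2}\cong(\Lwedge^{2}\Gamma_2)^{\otimes 2}$. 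Since each $\Lwedge^{2}\Gamma_i$ is a rank-one torsion-free group and the tensor product of such groups adds their types, the types of $\Lwedge^{2}\Gamma_1$ and $\Lwedge^{2}\Gamma_2$ have the same double; as doubling is injective on types, $\Lwedge^{2}\Gamma_1\cong\Lwedge^{2}\Gamma_2$, and therefore $(\Lwedge^{2}\Gamma_1)^{2}\cong(\Lwedge^{2}\Gamma_2)^{2}$. Combining the three identifications gives $K_1(\DI)\cong K_1(\DII)$.

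The remaining, and I expect hardest, point is the non-isomorphism $\Delta_1\not\cong\Delta_2$. Since $\Gamma_1\not\cong\Gamma_2$, this reduces to showing that $\Z^{2}\oplus\Gamma_1\cong\Z^{2}\oplus\Gamma_2$ forces $\Gamma_1\cong\Gamma_2$, i.e. that the free summand $\Z^{2}$ can be cancelled. This cancellation is not available for purely formal reasons — free summands need not cancel among finite-rank torsion-free groups, the endomorphism ring $\mathbb{M}_2(\Z)$ of $\Z^{2}$ failing to have stable range one — so it must be extracted from the particular groups produced by Theorem \ref{teo-fuchs}. I would therefore either isolate an isomorphism invariant of finite-rank torsion-free groups that separates $\Delta_1$ from $\Delta_2$, or verify cancellation directly for the Fuchs--Loonstra family, exploiting the concrete description of $\Gamma_i$ (for instance the structure of $\mathrm{End}(\Gamma_i)$) underlying Theorem \ref{teo-fuchs}.
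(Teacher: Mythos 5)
Your computation of $K_1(\DIi)\cong\Z^{2}\oplus\Gamma_i\oplus\Gamma_i\oplus(\Lwedge^{2}\Gamma_i)^{2}$ is correct and agrees with the paper's Lemma \ref{lem:desc}, though you reach it via the natural decomposition $\Lwedge^{k}(A\oplus B)\cong\bigoplus_{p+q=k}\Lwedge^{p}A\otimes\Lwedge^{q}B$ rather than by tracking generators along the direct limit. Your treatment of the $\Lwedge^{2}$ summand is also a genuinely different and rather cleaner route than the paper's: applying $\Lwedge^{4}$ to $\Gamma_1\oplus\Gamma_1\cong\Gamma_2\oplus\Gamma_2$ to get $(\Lwedge^{2}\Gamma_1)^{\otimes 2}\cong(\Lwedge^{2}\Gamma_2)^{\otimes 2}$, and then cancelling via types (addition of types under tensor product of rank-one groups, injectivity of doubling), replaces the paper's hands-on argument (Lemmas \ref{det} and \ref{segprodext}), which characterizes divisibility of $x_1\Lwedge x_2$ by $m$ and then chases $p$-heights through the matrix of the isomorphism. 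Your version is more functorial and avoids the determinant bookkeeping; the paper's is more self-contained in that it does not invoke the type-addition formula for tensor products.

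The genuine gap is in the last step, and it stems from a factual error: finitely generated Abelian groups \emph{are} cancellable in the class of all Abelian groups. This is the classical theorem of P.~M.~Cohn and E.~A.~Walker (1956): if $F$ is finitely generated and $F\oplus A\cong F\oplus B$, then $A\cong B$; see also Fuchs \cite{fuchs2}. The stable-range-one condition on $\mathrm{End}(\Z^{2})=\M_2(\Z)$ that you appeal to is a \emph{sufficient} condition for cancellation, not a necessary one, and its failure does not license the conclusion that $\Z^{2}$ might fail to cancel. (The well-known failures of cancellation among finite-rank torsion-free groups --- including the Fuchs--Loonstra examples themselves --- all involve cancelling a \emph{non-free} summand.) Consequently the detour you propose, of extracting cancellation from the specific structure of $\mathrm{End}(\Gamma_i)$ or of hunting for a separating invariant, is unnecessary, and as written your argument does not actually establish $\Delta_1\not\cong\Delta_2$: it ends with a plan rather than a proof. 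The paper closes this step in one line by citing exactly the Cohn--Walker cancellation property: $\Z^{2}\oplus\Gamma_1\cong\Z^{2}\oplus\Gamma_2$ would force $\Gamma_1\cong\Gamma_2$, contradicting Theorem \ref{teo-fuchs}.
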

We shall split the proof of Theorem \ref{counter} in several Lemmas.
We begin by observing how Lemma \ref{exter} makes the groups
$K_1(C^\ast(\Delta_i))$ easily realizable.
\begin{lemma}\label{lem:desc}
If $\Gamma$ is a torsion-free Abelian group of rank 2 and
$\Delta=\Z\oplus \Z \oplus \Gamma$, then
\[ K_1(C^\ast(\Delta)) \cong  \Z\oplus  \Z  \oplus \Gamma \oplus
\Gamma \oplus \Lwedge^2\Gamma\oplus \Lwedge^2\Gamma .\]
\end{lemma}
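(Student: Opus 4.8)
The plan is to compute $K_1(C^\ast(\Delta))$ for $\Delta=\Z\oplus\Z\oplus\Gamma$ directly from Lemma~\ref{exter}, which tells us that $K_1(C^\ast(\Delta))\cong\Lwedge_{\text{\tiny odd}}\Delta$. Since $\Gamma$ has rank $2$, the group $\Delta$ has rank $4$, so $\Lwedge^k\Delta=0$ for $k>4$, and the odd part reduces to $\Lwedge^1\Delta\oplus\Lwedge^3\Delta$. The whole task is therefore to identify these two summands as abstract groups, using the decomposition $\Delta=\Z\oplus\Z\oplus\Gamma$ together with the standard behaviour of exterior powers under direct sums.

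First I would record $\Lwedge^1\Delta\cong\Delta=\Z\oplus\Z\oplus\Gamma$, which accounts for the summands $\Z\oplus\Z\oplus\Gamma$ in the claimed answer. Then I would expand $\Lwedge^3\Delta$. The key tool is the exterior-power analogue of the binomial formula for a direct sum: for $A=B\oplus C$ one has $\Lwedge^k A\cong\bigoplus_{i+j=k}\Lwedge^i B\otimes\Lwedge^j C$. Applying this with $B=\Z\oplus\Z$ (so $\Lwedge^0 B\cong\Z$, $\Lwedge^1 B\cong\Z^2$, $\Lwedge^2 B\cong\Z$, and $\Lwedge^i B=0$ for $i\geq 3$) and $C=\Gamma$ (so $\Lwedge^0\Gamma\cong\Z$, $\Lwedge^1\Gamma\cong\Gamma$, $\Lwedge^2\Gamma$, and $\Lwedge^j\Gamma=0$ for $j\geq 3$ since $\Gamma$ has rank $2$), the degree-$3$ part collects the pairs $(i,j)\in\{(1,2),(2,1)\}$ only, because $(0,3)$ and $(3,0)$ vanish. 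This yields
\[
\Lwedge^3\Delta\cong\bigl(\Lwedge^1(\Z^2)\otimes\Lwedge^2\Gamma\bigr)\oplus\bigl(\Lwedge^2(\Z^2)\otimes\Lwedge^1\Gamma\bigr)\cong(\Z^2\otimes\Lwedge^2\Gamma)\oplus(\Z\otimes\Gamma).
\]
Using $\Z\otimes G\cong G$ and $\Z^2\otimes G\cong G\oplus G$, this simplifies to $\Lwedge^2\Gamma\oplus\Lwedge^2\Gamma\oplus\Gamma$. Combining the two degrees gives exactly $\Z\oplus\Z\oplus\Gamma\oplus\Gamma\oplus\Lwedge^2\Gamma\oplus\Lwedge^2\Gamma$, as asserted.

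The main point requiring care is the direct-sum decomposition formula for exterior powers over $\Z$: it is standard over fields, and it does hold for free modules and more generally, but since $\Gamma$ is merely torsion-free (not free) I would want to justify the naturality of the isomorphism $\Lwedge^k(B\oplus C)\cong\bigoplus_{i+j=k}\Lwedge^i B\otimes\Lwedge^j C$ in this setting. The cleanest route is to invoke the description of $\Lwedge\Gamma$ as a quotient of the tensor algebra recalled in the background section, from which the multiplicativity of the exterior algebra on a direct sum follows formally; alternatively, one realises $\Gamma$ as a direct limit of free groups $\Gamma_n$ and uses that both exterior powers and tensor products commute with direct limits, reducing the identity to the free case where it is classical. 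I would also note that the ranks $0\leq j\leq 2$ bound on $\Lwedge^j\Gamma$ rests on $\operatorname{rank}\Gamma=2$, which is what truncates the sum and keeps the computation finite. No genuine obstacle arises beyond this bookkeeping; the content of the lemma is entirely the combinatorics of which bigraded pieces survive.
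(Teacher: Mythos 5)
Your proof is correct and reaches the same decomposition $\Lwedge^3\Delta\cong\Gamma\oplus\Lwedge^2\Gamma\oplus\Lwedge^2\Gamma$ as the paper, but by a genuinely different route. The paper never invokes the direct-sum formula for exterior powers: it writes $\Delta=\injj(\Z\oplus\Z\oplus\Gamma_n,\id\oplus\id\oplus\phi_n)$ with $\Gamma_n\cong\Z^2$, lists the four generators $e_1^n\Lwedge e_2^n\Lwedge f_j^n$ and $e_j^n\Lwedge f_1^n\Lwedge f_2^n$ of $\Lwedge^3(\Z\oplus\Z\oplus\Gamma_n)$, and tracks their images under $\Lwedge^3(\id\oplus\id\oplus\phi_n)$, observing that the first two generators form threads transforming by $\phi_n$ (hence contributing a copy of $\Gamma$ in the limit) while each of the other two forms a thread transforming by $\Lwedge^2(\phi_n)$ (hence a copy of $\Lwedge^2\Gamma$ each). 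You instead apply the K\"unneth-type isomorphism $\Lwedge^k(B\oplus C)\cong\bigoplus_{i+j=k}\Lwedge^i B\otimes\Lwedge^j C$ with $B=\Z^2$ and $C=\Gamma$, and you correctly flag the only point needing care, namely that $\Gamma$ is torsion-free but not free: either of your two justifications closes this (the graded-algebra isomorphism $\Lwedge(B\oplus C)\cong\Lwedge B\otimes\Lwedge C$ holds for arbitrary modules over a commutative ring, as in Bourbaki, and the direct-limit reduction is precisely the mechanism the paper's own proof relies on). Your route is shorter and generalizes at once to arbitrary ranks; the paper's explicit thread-chasing is more elementary and self-contained, using only that wedge products commute with direct limits, which is already recorded in the background. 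Both arguments are complete, including your observation that $\Lwedge^j\Gamma=0$ for $j\geq3$ is what truncates the sum.
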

\begin{proof}
As $\Delta$ has rank 4,
\begin{equation}\label{eq:wed}\Lwedge_{\text{\tiny odd}}\Delta=\Lwedge^1 \Delta
\oplus \Lwedge^3\Delta\cong \Delta \oplus
\Lwedge^3\Delta.\end{equation}

Put $\Gamma =\injj (\Gamma_{n},\phi_{n})$, with $\Gamma_{n} \cong
\Z^2$.  Then, defining $\id\oplus \id \oplus \phi_{n} \colon
\Z\oplus \Z\oplus \Gamma_{n}\to \Z\oplus \Z\oplus \Gamma_{n+1}$ in
the obvious way, we have that $\Delta=\injj(\Z\oplus \Z\oplus
\Gamma_{n},\id\oplus \id \oplus \phi_{n})$ and $\Lwedge^3\Delta
=\injj (\Lwedge^3(\Z\oplus \Z\oplus\Gamma_{n}),\Lwedge^3(\id \oplus
\id \oplus \phi_{n}))$.

If $e_j^n$, $j=1,2$ are the generators of $\Z\oplus \Z$ and $f_j^n$,
$j=1,2$ are the generators of $\Gamma_{n}$, $\Lwedge^3(\Z\oplus
\Z\oplus\Gamma_{n})=\langle e_1^n\Lwedge e_2^n\Lwedge f_1^n
,e_1^n\Lwedge e_2^n\Lwedge f_2^n,e_1^n\Lwedge f_1^n\Lwedge
f_2^n,e_2^n\Lwedge f_1^n\Lwedge f_2^n\rangle$. The images of each of
these generators under the homomorphism $\Lwedge^3(\id\oplus
\id\oplus\phi_{n})$ are:

\begin{align*}
\Lwedge^3(\id\oplus \id\oplus\phi_{n})\biggl(e_1^n\Lwedge
e_2^n\Lwedge
f_j^n\biggr)&= e_1^{n+1}\Lwedge e_2^{n+1}\Lwedge \phi_{n}(f_j^n), \quad {j=1,2}\\
\Lwedge^3(\id\oplus \id\oplus\phi_{n})\biggl(e_j^n\Lwedge
f_1^n\Lwedge f_2^n\biggr)&= e_j^{n+1}\Lwedge \bigl(
\Lwedge^2(\phi_{n})(f_1^n\Lwedge f_2^n)\,\bigr), \quad {j=1,2}.
\end{align*}

In the limit, the thread formed by the first two generators will
yield a copy of $\Gamma$ while the one formed by each of the other
two will yield a copy of $\Lwedge^2\Gamma$. This and \eqref{eq:wed}
give the Lemma.
\end{proof}
 We now take care of $\Lwedge^2(\Gamma)$. This is a rank one
group. Abelian groups of rank one are completely classified by their
so-called type.

The type of an Abelian group $A$ is defined in terms of $p$-heights.
 Given a prime $p$,
the largest integer $k$ such that $p^{k}\mid a$ is called the
\emph{$p$-height} $h_p(a)$ of $a$. The sequence of $p$-heights
$\chi(a)=(h_{p_1}(a),\ldots,h_{p_n}(a),\ldots)$, where $p_1,\ldots,
p_n,\ldots$ is an enumeration of the primes, is then called the
\emph{characteristic} or the \emph{height-sequence} of $a$. Two
characteristics $(k_1,\ldots,k_n,\ldots)$ and
$(l_1\,\ldots,l_n,\ldots)$ are considered equivalent if $k_n= l_n$
for all but a finite number of finite indices. An equivalence class
of characteristics is called a \emph{type}. If $\chi(a)$ belongs to
a type $\mathbf{t}$, then we say that \emph{ a is of type }
$\mathbf{t}$.  In a  torsion-free group of rank one  all elements
are of the same type (such groups are called \emph{homogeneous}).
For more details about $p$-heights and types, see \cite{fuchs2}. The
only fact we need here is that two groups of rank 1 are isomorphic
if and only if they have nontrivial elements with the same type,
Theorem 85.1 of \cite{fuchs2}.

We now study the type of  groups $\Gamma \Lwedge \Gamma$ with
$\Gamma$ of rank 2.
\begin{lemma}\label{det}
Let $\Gamma$ be a torsion-free  group of rank 2 and let $x_1,x_2\in
\Gamma$.  The element $ x_1\Lwedge x_2\in \Gamma \Lwedge \Gamma$ is
divisible by the integer $m$ if and only if there is  some element
$k_1x_1+k_2 x_2 \in \Gamma $  divisible by $m$  with either   $k_1$
or $k_2$ coprime with  $m$.
\end{lemma}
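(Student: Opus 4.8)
The plan is to work throughout inside the rational vector space $V=\Gamma\otimes\Q\cong\Q^2$, in which $\Gamma$ sits as a full subgroup and, since $\wedge^2\Gamma$ is a direct limit of free groups and hence torsion-free, $\wedge^2\Gamma$ embeds as a rank-one subgroup of $\wedge^2 V\cong\Q$. I first dispose of the degenerate case in which $x_1,x_2$ are $\Q$-linearly dependent (equivalently $x_1\wedge x_2=0$): there the left-hand side holds for every $m$, and the right-hand side is treated by hand from an integral relation among $x_1,x_2$. So assume $x_1,x_2$ independent; then they form a $\Q$-basis of $V$, the class $x_1\wedge x_2$ is a nonzero generator of $\wedge^2 V$ under the identification $\wedge^2 V\cong\Q$, and ``$x_1\wedge x_2$ is divisible by $m$ in $\wedge^2\Gamma$'' becomes ``$\tfrac1m(x_1\wedge x_2)\in\wedge^2\Gamma$''.

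For the ``if'' direction, suppose $k_1x_1+k_2x_2=my$ with $y\in\Gamma$ and, say, $\gcd(k_1,m)=1$. Wedging with $x_2$ and using $x_2\wedge x_2=0$ gives $k_1(x_1\wedge x_2)=m\,(y\wedge x_2)$. Choosing $a,b\in\Z$ with $ak_1+bm=1$ I then obtain
\[
x_1\wedge x_2=(ak_1+bm)(x_1\wedge x_2)=m\bigl(a\,(y\wedge x_2)+b\,(x_1\wedge x_2)\bigr),
\]
so $m$ divides $x_1\wedge x_2$ in $\wedge^2\Gamma$. The case $\gcd(k_2,m)=1$ is identical after wedging with $x_1$. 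This direction is robust and needs no finiteness hypotheses.

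The substance is the ``only if'' direction, and here I would argue one prime at a time. Since in a torsion-free group divisibility by $m$ is equivalent, via Bézout, to divisibility by each prime-power $p^{e}\,\|\,m$, I fix a prime $p$ and pass to the localization $\Lambda:=\Gamma\otimes\Z_{(p)}\subseteq V$, a $\Z_{(p)}$-module containing the lattice $\langle x_1,x_2\rangle\otimes\Z_{(p)}$. A short determinant computation shows that the $p$-height of $x_1\wedge x_2$ is finite exactly when $\Lambda$ has bounded denominators, i.e.\ when $\Lambda$ is free of rank two over the discrete valuation ring $\Z_{(p)}$; in that case Smith normal form produces a $\Z_{(p)}$-basis of $\Lambda$ adapted to $\langle x_1,x_2\rangle$, with elementary divisors $p^{a}\le p^{b}$, and the $p$-height of $x_1\wedge x_2$ equals $a+b$. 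Reading a suitable vector $\tfrac1{p^{e}}(k_1x_1+k_2x_2)\in\Lambda$ off this adapted basis, and transporting it back to the $x_1,x_2$-coordinates by the unimodular change of basis (which preserves integrality and the $p$-content), yields integers $k_1,k_2$ with $p^{e}\mid k_1x_1+k_2x_2$ in which not both coordinates are divisible by $p$.

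The main obstacle I anticipate is the final assembly: the lemma asks for a single pair $(k_1,k_2)$ in which one \emph{fixed} coordinate is coprime to the \emph{whole} modulus $m$, whereas the local analysis only controls one prime at a time and may favour different coordinates at different primes. Combining the prime-by-prime vectors by the Chinese Remainder Theorem, and checking that one index can be kept a unit simultaneously at all $p\mid m$, is the delicate point; it is here that one must exploit that $x_1,x_2$ sit primitively enough in $\Gamma$ (so that the smaller elementary divisor $p^{a}$ is trivial at each relevant prime), after which the preserved-content observation guarantees a unit coordinate and the CRT patching goes through. I would isolate this compatibility as the one genuinely technical point and treat the remainder as above.
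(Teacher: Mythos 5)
Your ``if'' direction is correct and complete: wedging the relation $k_1x_1+k_2x_2=my$ with $x_2$ (or $x_1$) and applying B\'ezout is, in substance, the determinant identity $u_1\Lwedge u_2=\det(\alpha_{ij})\,x_1\Lwedge x_2$ on which the paper's own proof rests. The ``only if'' direction, however, is not a proof: you explicitly defer the decisive step, and the hypothesis you propose to invoke there --- that $x_1,x_2$ ``sit primitively enough in $\Gamma$'' so that the smaller elementary divisor $p^{a}$ is trivial at each relevant prime --- is neither part of the lemma's hypotheses nor automatic. Your own local analysis shows why it is indispensable. If at some $p\mid m$ both elementary divisors are positive the conclusion fails: take $\Gamma=\Z^2$ with basis $e_1,e_2$, put $x_i=pe_i$ and $m=p^2$; then $x_1\Lwedge x_2=p^2(e_1\Lwedge e_2)$ is divisible by $p^2$, yet $k_1x_1+k_2x_2=p(k_1e_1+k_2e_2)$ is divisible by $p^2$ only when $p\mid k_1$ and $p\mid k_2$, so no admissible pair $(k_1,k_2)$ exists. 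The Chinese-Remainder patching you flag for composite $m$ fails independently: for $\Gamma=\tfrac12\Z x_1\oplus\tfrac13\Z x_2$ and $m=6$, the element $x_1\Lwedge x_2$ is divisible by $6$, but the elements of $\Gamma$ divisible by $6$ are exactly $3ax_1+2bx_2$, so the first coordinate is always divisible by $3$ and the second by $2$. Thus the step you isolated as ``the one genuinely technical point'' cannot be carried out from the stated hypotheses, and no amount of care in the assembly will fix it.

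For comparison, the paper's proof is a short determinant manipulation that disposes of exactly this step with the phrase ``we can assume (by conveniently modifying the $\alpha_{ij}$'s) that $\alpha_{11}=k_1/m$ and $\alpha_{12}=k_2/m$ with either $k_1$ or $k_2$ coprime with $m$''; your localization and Smith-normal-form analysis is more careful and in fact pinpoints why that modification is impossible in the examples above. What survives unconditionally is only the equivalence ``$m\mid x_1\Lwedge x_2$ iff there exist $u_1,u_2\in\Gamma$ with $u_1\Lwedge u_2=\tfrac1m(x_1\Lwedge x_2)$.'' To obtain the statement as written you must add a purity hypothesis (for each $p\mid m$, the smaller elementary divisor of $\langle x_1,x_2\rangle$ in $\Gamma\otimes\Z_{(p)}$ is trivial), restrict to $m=p^k$, and then check that this normalization is available, and harmless for the height computation, in the only place the lemma is used (Lemma \ref{segprodext}, where types are insensitive to finitely many primes and to bounded shifts of heights). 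As it stands, your proposal correctly identifies the gap but does not close it.
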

\begin{proof}
We can without loss of generality assume that the subgroup generated
by $x_1,x_2$ is isomorphic to $\Z^2$ and that $\Gamma$ is an
additive subgroup of the vector space  spanned  over $\Q$ by
$x_1,x_2$. Now $x\Lwedge y$ will be divisible by $m$ if and only if
there are elements $u_1,u_2$ in $\Gamma$ such that
$u_i=\alpha_{i1}x_1+\alpha_{i2}x_2$ with $ \det (\alpha_{ij})=1/m$
(note that $u_1\Lwedge u_2= \det(\alpha_{ij})\,x_1\Lwedge x_2$). To
get that determinant we clearly need some denominator $m$ and we can
assume (by conveniently modifying the $\alpha_{ij}$'s) that
$\alpha_{11}=k_{1}/m$ and $\alpha_{12}=k_{2}/m$ with either $k_1$ or
$k_2$ coprime with $m$. The element of $\Gamma$ we were seeking is
then $k_1x_1+k_2x_2$.
 \end{proof}
\begin{lemma}Let $\Gamma_1$ and $\Gamma_2$ be two rank 2, torsion-free Abelian
groups.  If   $\,\Gamma_1\oplus \Gamma_1\cong \Gamma_2\oplus
\Gamma_2$, then $\Lwedge^{2}(\Gamma_1)\cong
\Lwedge^{2}(\Gamma_2)$.\label{segprodext}
\end{lemma}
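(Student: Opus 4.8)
The plan is to show that $\Lwedge^2(\Gamma_1)$ and $\Lwedge^2(\Gamma_2)$ are rank-one torsion-free groups with the same type, and then invoke Theorem 85.1 of \cite{fuchs2} (quoted in the paragraph preceding Lemma \ref{det}) to conclude they are isomorphic. Since each $\Gamma_i$ has rank $2$, the group $\Lwedge^2(\Gamma_i)$ has rank $\binom{2}{2}=1$, so the classification by type applies. The given hypothesis $\Gamma_1\oplus \Gamma_1\cong \Gamma_2\oplus\Gamma_2$ is what I must convert into a statement equating types.

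First I would exploit the multiplicativity of exterior powers over direct sums. For torsion-free Abelian groups there is a natural isomorphism
\[
\Lwedge^2(\Gamma_i\oplus \Gamma_i)\cong \bigoplus_{p+q=2}\Lwedge^p(\Gamma_i)\otimes \Lwedge^q(\Gamma_i)\cong \Lwedge^2(\Gamma_i)\oplus \bigl(\Gamma_i\otimes \Gamma_i\bigr)\oplus \Lwedge^2(\Gamma_i),
\]
so applying $\Lwedge^2$ to both sides of $\Gamma_1\oplus\Gamma_1\cong\Gamma_2\oplus\Gamma_2$ already forces an isomorphism of the resulting sums. However, rather than disentangle the $\Gamma_i\otimes\Gamma_i$ summand, I expect the cleaner route is to compute the type of $\Lwedge^2(\Gamma_i)$ directly from the type of $\Gamma_i\oplus\Gamma_i$ via Lemma \ref{det}. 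The key observation is that Lemma \ref{det} characterizes divisibility of a generator $x_1\Lwedge x_2$ of $\Lwedge^2(\Gamma)$ (for a rank-$2$ group $\Gamma$) in terms of divisibility of elements of $\Gamma$; this is precisely the kind of arithmetic data that the type records. So I would extract from Lemma \ref{det} a formula expressing the height sequence $\chi(x_1\Lwedge x_2)$ in $\Lwedge^2(\Gamma)$ in terms of the heights of elements of $\Gamma$, showing that the type of $\Lwedge^2(\Gamma)$ is an invariant determined by $\Gamma$ in a way that is insensitive to passing from $\Gamma$ to $\Gamma\oplus\Gamma$.

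Concretely, the type of $\Lwedge^2(\Gamma)$ for a rank-$2$ group is the ``determinant type'': at each prime $p$, the $p$-height of $x_1\Lwedge x_2$ equals the largest $k$ for which some $k_1x_1+k_2x_2$ with $p\nmid(k_1,k_2)$ is divisible by $p^k$ — this is exactly what Lemma \ref{det} gives prime by prime. Now for a direct sum $\Gamma\oplus\Gamma$ of rank $4$, I would note that $\Lwedge^4(\Gamma\oplus\Gamma)$ is again rank one and, by the same determinant computation, has type equal to twice (in the additive sense on characteristics, i.e. the product in the sense of heights adding) the type of $\Lwedge^2(\Gamma)$; equivalently $\Lwedge^4(\Gamma\oplus\Gamma)\cong \Lwedge^2(\Gamma)\otimes\Lwedge^2(\Gamma)$. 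Since $\Gamma_1\oplus\Gamma_1\cong\Gamma_2\oplus\Gamma_2$ gives $\Lwedge^4(\Gamma_1\oplus\Gamma_1)\cong\Lwedge^4(\Gamma_2\oplus\Gamma_2)$, we obtain equality of the doubled types, and then I would argue that on types of rank-one groups the ``doubling'' map is injective up to the finite-support equivalence — doubling a height sequence and then comparing modulo finite alterations recovers the original type — so that the types of $\Lwedge^2(\Gamma_1)$ and $\Lwedge^2(\Gamma_2)$ coincide.

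The main obstacle I anticipate is this last injectivity-of-doubling step: I must verify that two rank-one types $\mathbf{t}_1,\mathbf{t}_2$ with $\mathbf{t}_1+\mathbf{t}_1=\mathbf{t}_2+\mathbf{t}_2$ (as equivalence classes of height sequences) must satisfy $\mathbf{t}_1=\mathbf{t}_2$. Working at a single prime $p$, the height values are in $\{0,1,2,\dots,\infty\}$, and $2k_n^{(1)}=2k_n^{(2)}$ for all but finitely many $n$ clearly forces $k_n^{(1)}=k_n^{(2)}$ for all but finitely many $n$ (with $\infty$ handled separately since $2\cdot\infty=\infty$ only when the original is $\infty$), which is exactly the finite-support equivalence defining equality of types. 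This makes the doubling map injective on types, completing the argument. The only care needed is the bookkeeping that the isomorphism $\Lwedge^4(\Gamma\oplus\Gamma)\cong\Lwedge^2(\Gamma)\otimes\Lwedge^2(\Gamma)$ translates tensor product of rank-one groups into addition of height sequences prime-by-prime, which is a standard and routine fact about types that I would state without belaboring.
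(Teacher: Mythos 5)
Your proof is correct, but it takes a genuinely different route from the paper's. The paper works directly with the hypothesized isomorphism $\phi\colon\Gamma_1\oplus\Gamma_1\to\Gamma_2\oplus\Gamma_2$: it writes $\phi(v_1,0)$ and $\phi(w_1,0)$ in coordinates with respect to a maximal independent set $\{v_2,w_2\}$ of $\Gamma_2$, observes that the resulting $4\times 2$ integer matrix has a nonsingular $2\times 2$ submatrix, and then uses Lemma \ref{det} to show that for every prime $p$ not dividing the relevant subdeterminants, $p^k\mid v_1\Lwedge w_1$ forces $p^k\mid v_2\Lwedge w_2$; repeating the argument for $\phi^{-1}$ equates the two types outside a finite set of primes. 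Your argument replaces all of this coordinate bookkeeping by functoriality: you apply $\Lwedge^4$ to the isomorphism, use $\Lwedge^4(\Gamma\oplus\Gamma)\cong\Lwedge^2(\Gamma)\otimes\Lwedge^2(\Gamma)$ (valid because $\Lwedge^3\Gamma=\Lwedge^4\Gamma=0$ for a rank-two torsion-free $\Gamma$, e.g.\ by writing $\Gamma$ as a direct limit of copies of $\Z^2$ --- a vanishing you should state explicitly), then invoke additivity of types under tensor products of rank-one groups, and finally the injectivity of doubling on types, which you verify correctly including the treatment of infinite entries. Your route is shorter and renders Lemma \ref{det} unnecessary, at the price of importing two standard facts (the exterior-algebra decomposition of a direct sum and the type of a tensor product of rank-one groups); the paper's route is more elementary and self-contained, needing only its own divisibility criterion. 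The first displayed decomposition of $\Lwedge^2(\Gamma_i\oplus\Gamma_i)$ in your writeup is a dead end you rightly abandon --- only the $\Lwedge^4$ computation is actually used.
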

\begin{proof}
Let $\{v_1,w_1\}$ and $\{v_2,w_2\}$ be maximal independent sets in
$\Gamma_1$ and $\Gamma_2$, respectively and denote by $\phi\colon
\Gamma_1\oplus \Gamma_1 \to \Gamma_2\oplus \Gamma_2$ the
hypothesized isomorphism.

 By conveniently re-defining the
elements $v_i$ and $w_i$ it may be assumed that
\begin{align*}
\phi(v_1,0)&= (\alpha_{11}v_2+\alpha_{12}w_2,\beta_{11}
v_2+\beta_{12} w_2)\\
\phi(w_1,0)&=(\alpha_{21}v_2+\alpha_{22}w_2,\beta_{21}
v_2+\beta_{22} w_2),
\end{align*}with $\alpha_{ij},\beta_{i,j}\in \Z$, $i,j\in \{1,2\}$.

 We will now find a finite set of primes $F$  such that
$v_2\Lwedge w_2$ is  divisible by $p^k$ whenever  $v_1\Lwedge w_1$
is divisible by $p^k$, for every prime $p\notin F$.  Since the whole
process can be repeated  for $\phi^{-1}$, this will show that
$v_1\Lwedge w_1$ and $v_2\Lwedge w_2$ have the same type.

Since $\phi$ is an isomorphism, the matrix
\[M= \left(\begin{array}{ll}
\alpha_{11} & \alpha_{21}\\
\alpha_{12} & \alpha_{22}\\
\beta_{11} & \beta_{21}\\
\beta_{12} & \beta_{22}\end{array}\right)\] has rank two. At least
one of the following submatrices must then have rank 2 as well:
\[
M_1=\left(\begin{array}{ll} \alpha_{11} &\alpha_{21}\\
\alpha_{12}&\alpha_{22}\end{array}\right), \quad
M_2=\left(\begin{array}{ll} \beta_{11} &\beta_{21}\\
\beta_{12}&\beta_{22}\end{array}\right) \quad \mbox{ or } \quad
M_3=\left(\begin{array}{ll} \alpha_{11} &\alpha_{21}\\
\beta_{11}&\beta_{21}\end{array}\right).\]

Let $p$ be any prime not dividing $\det(M_1)$, $\det(M_2)$ or
$\det(M_3)$ and suppose $p^k$ divides  $v_1\Lwedge w_1$. By Lemma
\ref{det} there is an element $A=k_1v_1+k_2w_1\in \Gamma_1$
divisible by $p^k$ with either  $k_1$ or $k_2$ coprime with $p$.
Then
\begin{align} \phi(A,0)&=k_1\phi(v_1,0)+k_2 \phi(w_1,0) =\label{A0}\\
 &\Bigl((k_1 \alpha_{11}+k_2 \alpha_{21}) v_2+ (k_1 \alpha_{12}+k_2
\alpha_{22}) w_2\, ,\, (k_1 \beta_{11}+k_2 \beta_{21}) v_2+ (k_1
\beta_{12}+k_2 \beta_{22})w_2\Bigr)\in \Gamma_2\times \Gamma_2
 \notag
\end{align}

Suppose for instance that $M_1$ has rank 2. The only solution modulo
$p$ to the system
\[
\left\{ \begin{array}{lcl} \alpha_{11}x+\alpha_{21}y&=&0\\
\alpha_{12}x+\alpha_{22}y&=&0\end{array}\right.\] is then the
trivial one. The integers   $k_1$ and $k_2$ cannot therefore be  a
solution to the system (they are not both coprime with $p$). It
follows that one of the integers $k_1\alpha_{11}+k_2\alpha_{21}$ or
$\alpha_{12}k_{1}+\alpha_{22}k_2$ is not  a multiple of $p$.

%But
%\[(k_1 \alpha_{11}+k_2 \alpha_{21}) v_2+ (k_1
%\alpha_{12}+k_2 \alpha_{22})w_2\in \Gamma_2\Lwedge w_2=(k_1
%\alpha_{11}+k_2 \alpha_{21}) v_2\Lwedge w_2.\] Since
%$\alpha_{11}k_1+\alpha_{21}k_2$ or $\alpha_{12}k_{1}+\alpha_{22}k_2$
%is not a multiple of $p$,

If $M_2$ or $M_3$ have  rank two we argue exactly in the same way.
At the end  we find that at least one of the
$k_1\alpha_{1i}+k_2\alpha_{2i}$ or $k_1\beta_{1i}+k_2\beta_{2i}$ is
not a multiple of $p$ .

We know by \eqref{A0} that both $(k_1 \alpha_{11}+k_2 \alpha_{21})
v_2+ (k_1 \alpha_{12}+k_2 \alpha_{22})w_2$ and $(k_1 \beta_{11}+k_2
\beta_{21}) v_2+ (k_1 \beta_{12}+k_2 \beta_{22})w_2$ are divisible
by $p^k$ and we conclude with Lemma \ref{det} that $v_2\Lwedge w_2$
is divisible by $p^k$.
\end{proof}

\textbf{Proof of Theorem \ref{counter}} To see that
$K_1(C^\ast(\Delta_1))\cong K_1(C^\ast(\Delta_2))$, simply  put
together Lemma \ref{segprodext} and Lemma \ref{lem:desc}.

Since $\Gamma_1$ and $\Gamma_2$ are not isomorphic and finitely
generated Abelian groups have the cancellation property, $\Delta_1$
and $\Delta_2$ cannot be isomorphic, either.
\endproof
\begin{remark}
The argument of Lemma \ref{lem:desc} shows that
$K_0(C^\ast(\Delta))$ is (again) isomorphic to $\Z\oplus \Z \oplus
\Gamma \oplus \Gamma \oplus \Lwedge^2(\Gamma) \oplus
\Lwedge^2(\Gamma)$ (this time
 $K_0(C^\ast(\Delta))\cong \Lwedge^0\Delta
  \oplus \Lwedge^2\Delta \oplus \Lwedge^4 \Delta$ with
$\Lwedge^2\Delta\cong \Z \oplus \Gamma \oplus \Gamma \oplus
\Lwedge^2\Gamma$ and $\Lwedge^4 \Delta\cong \Lwedge^2 \Gamma$).

The group $C^\ast$-algebras $\DI$ and $\DII$ of Theorem
\ref{counter} have therefore the same $K$-theory.
\end{remark}
\section{Relating  $\UG$
and  $\Gamma$} \label{charac} This Section is devoted to evidence
what is the relation between  two $C^\ast$-algebras
$C^\ast(\Gamma_1)$ and $C^\ast(\Gamma_2)$ with topologically
isomorphic unitary groups. A result like Theorem \ref{index} cannot
be expected for general Abelian groups, as for instance all
countably infinite torsion groups have isometric $C^\ast$-algebras.
The right question to ask is obviously  whether group
$C^\ast$-algebras are determined by their unitary groups. Even if
this question also has a  negative answer,   two group
$C^\ast$-algebras $C^\ast(\Gamma_1)$ and $C^\ast(\Gamma_2)$ are
 strongly related when $\UGI$ and $\UGII$ are
topologically isomorphic as the contents of this Section show. Our
main tools here will be of topological nature and we shall regard
$\UG$ as $C(\widehat{\Gamma},\T)$.

We begin with a well-known observation. Denote by $C^0(X,\T)$ the
subgroup of $C(X,\T)$ consisting of all nullhomotopic maps, that is,
$C^0(X, \T)$ is  the connected component of the identity of
$C(X,\T)$. Let also  $\pi^1(X)$ denote the quotient
$C(X,\T)/C^0(X,\T)$, also known as the first cohomotopy group of $X$
and often denoted as  $[X,\T]$. It is well known that $C^0(X,\T)$
coincides with the group of functions that factor through $\R$, that
is, $C^0(X,\T)$ is the range of the exponential map $\exp:C(X,\R)\to
C(X,\T)$.
\begin{lemma}[Section 3 of \cite{pest95}, see page 405 of \cite{galihern99fo} for this form]\label{basicc0}
If $X$ is a compact Hausdorff space, the structure of $C(X,\T)$ is
described by the following exact sequence \[ 0\to C(X,\Z)\to C(X,\R)
\to  C^0(X,\T) \to C(X,\T)\to \pi^1(X).\] In addition $C^0(X,\T)$ is
open and splits, i.e., $C(X,\T) \cong C^0(X,\T)\oplus \pi^1(X)$.
\end{lemma}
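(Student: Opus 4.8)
The statement to prove is the exactness of the five-term sequence
\[
0\to C(X,\Z)\to C(X,\R)\to C^0(X,\T)\to C(X,\T)\to \pi^1(X),
\]
together with the assertions that $C^0(X,\T)$ is open in $C(X,\T)$ and that the extension $0\to C^0(X,\T)\to C(X,\T)\to \pi^1(X)\to 0$ splits. The plan is to build the whole sequence out of the short exact sequence of coefficient groups $0\to \Z\to \R\xrightarrow{\exp} \T\to 0$ and the remark already made before the statement, namely that $C^0(X,\T)$ is exactly the range of $\exp\colon C(X,\R)\to C(X,\T)$.

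First I would treat the leftmost three arrows. The map $C(X,\Z)\to C(X,\R)$ is the inclusion of integer-valued (hence locally constant) functions, which is visibly injective, giving exactness at $C(X,\Z)$. The next map $C(X,\R)\to C^0(X,\T)$ is post-composition with $\exp$; by the preceding remark this is precisely the definition of $C^0(X,\T)$ as the image of $\exp$, so the map is surjective onto $C^0(X,\T)$, and its kernel consists of the continuous functions $f\colon X\to\R$ with $\exp\circ f\equiv 1$, i.e. the functions taking values in $\ker(\exp)=\Z$. Since $X$ is compact and such an $f$ is continuous and integer-valued, it is locally constant, hence exactly an element of $C(X,\Z)$. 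This gives exactness at $C(X,\R)$ and at $C^0(X,\T)$ in one stroke. Exactness at $C(X,\T)$ means the image of the inclusion $C^0(X,\T)\hookrightarrow C(X,\T)$ equals the kernel of the quotient map $C(X,\T)\to\pi^1(X)=C(X,\T)/C^0(X,\T)$, which is immediate from the definition of the quotient.

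The remaining two points are the genuinely topological content. For openness of $C^0(X,\T)$, the key fact is that on a compact space two maps into $\T$ that are uniformly close (say within distance $<2$, or within a uniform lift of a small arc) are homotopic; concretely, if $g\in C^0(X,\T)$ and $h$ is within the appropriate sup-norm ball, then $h/g$ has image in a contractible arc of $\T$, hence lifts through $\R$ and is nullhomotopic, so $h\in C^0(X,\T)$. Thus $C^0(X,\T)$ contains a neighborhood of each of its points and is open (equivalently, it is the connected component of the identity in a topological group, which is always closed, and being also the image of the connected group $C(X,\R)$ it is the identity component, hence open because $\pi^1(X)$ is discrete). For the splitting, the cleanest route is to invoke that $C^0(X,\T)$ is an open, hence closed, connected subgroup and that the quotient $\pi^1(X)=[X,\T]$ is a discrete (indeed free Abelian, for nice $X$) group; an extension of a discrete, in particular free or at least projective in the relevant category, quotient by a divisible group splits. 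Since $C^0(X,\T)$ is divisible (it is the continuous image under the surjective homomorphism $\exp$ of the divisible group $C(X,\R)$), the extension by the discrete quotient $\pi^1(X)$ splits as topological groups, yielding $C(X,\T)\cong C^0(X,\T)\oplus\pi^1(X)$.

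The main obstacle I expect is the splitting assertion rather than the exactness, which is essentially formal. The subtlety is that one wants a \emph{topological} splitting, so it is not enough to split abstractly as Abelian groups; one must use that $C^0(X,\T)$ is open so that the quotient topology on $\pi^1(X)$ is discrete, and then argue that a continuous section exists. This is where divisibility of $C^0(X,\T)$ together with discreteness of $\pi^1(X)$ does the work: a homomorphism from a discrete group into any group is automatically continuous, so one only needs the algebraic splitting, and that follows because $C^0(X,\T)$, being divisible, is an injective (equivalently, a direct summand in every containing Abelian group) $\Z$-module. I would flag that the cited references (\cite{pest95}, \cite{galihern99fo}) carry out these verifications, and present the argument above as the reader's-convenience reconstruction.
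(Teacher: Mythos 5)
Your argument is correct: the exactness claims are formal consequences of the coefficient sequence $0\to\Z\to\R\xrightarrow{\exp}\T\to 0$ together with the stated identification of $C^0(X,\T)$ with the range of $\exp$, the openness follows from the uniform-closeness/lifting argument, and the splitting follows from divisibility of $C^0(X,\T)$ (as a quotient of the divisible group $C(X,\R)$) plus discreteness of $\pi^1(X)$, which makes the algebraic section automatically continuous. The paper itself supplies no proof here --- it cites \cite{pest95} and \cite{galihern99fo} --- and your reconstruction is the standard argument carried out in those references; the only inessential remark is your aside that $\pi^1(X)$ is free Abelian for nice $X$, which your proof does not need and which can fail for general compact Hausdorff $X$.
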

Our second observation is that, as far as group $C^\ast$-algebras
are concerned, all discrete Abelian groups have a splitting torsion
subgroup.
\begin{theorem}[Corollary 10.38 \cite{hoffmorr}]\label{split}
The connected component  $G_0$ of a compact group $G$, splits
topologically, i.e, $G$ is homeomorphic to $G_0\times G/G_0$.
\end{theorem}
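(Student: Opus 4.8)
The plan is to reduce the asserted homeomorphism to the existence of a continuous cross section of the canonical projection $q\colon G\to G/G_0$, and then to build such a section from the projective-limit structure of $G$. First I would observe that a continuous $s\colon G/G_0\to G$ with $q\circ s=\id$ already suffices: the map $\phi\colon G_0\times G/G_0\to G$, $\phi(n,x)=s(x)\,n$, is continuous, onto (every $g$ lies in the coset $s(q(g))\,G_0$) and injective (from $s(x)n=s(x')n'$ one gets $x=q(s(x)n)=q(s(x')n')=x'$, whence $n=n'$); since $G_0\times G/G_0$ is compact and $G$ is Hausdorff, $\phi$ is automatically a homeomorphism. Thus the whole problem is the construction of $s$.

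Next I would bring in the structure theory. The subgroup $G_0$ is closed and normal and $G/G_0$ is a totally disconnected compact group, i.e.\ a Stone space. By Peter--Weyl, $G$ is a projective limit $G=\varprojlim_i L_i$ of compact Lie groups, with projections $p_i\colon G\to L_i$ and bonding maps $p_{ij}\colon L_j\to L_i$ for $i\le j$. In a compact Lie group the identity component is open, so each $\pi_0(L_i):=L_i/(L_i)_0$ is finite and discrete; moreover $G_0=\varprojlim_i (L_i)_0$ and $G/G_0=\varprojlim_i \pi_0(L_i)$, with $q=\varprojlim_i q_i$ for the Lie-level projections $q_i\colon L_i\to \pi_0(L_i)$.

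The section is then assembled level by level. At each level $q_i$ splits by a mere choice of coset representatives $\tau_i\colon\pi_0(L_i)\to L_i$, and such a $\tau_i$ is automatically continuous because its domain is finite and discrete. The point that makes the levels fit together is that a surjection $p_{ij}$ of compact Lie groups carries $(L_j)_0$ onto $(L_i)_0$ (a connected finite-index subgroup is open, hence contains and is contained in the identity component), and therefore maps each $q_j$-fiber onto the corresponding $q_i$-fiber; consequently a representative already chosen at level $i$ can always be lifted to a representative at level $j$ lying over it. Choosing the $\tau_i$ compatibly with all the $p_{ij}$ and setting $s=\varprojlim_i\tau_i$ produces a continuous section of $q$, and the first paragraph then finishes the proof.

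The main obstacle is exactly this last compatibility requirement over the directed index set. When $G$ is metrizable the system admits a cofinal chain and one builds the $\tau_i$ by a transparent bottom-up induction, lifting at each successor stage. For a general (possibly uncountable) compact group the index set need not be a chain, and the naive finite-intersection/compactness argument fails: the bonding maps $\pi_0(L_j)\to\pi_0(L_i)$ are typically not injective, so a family of representatives chosen at a high level does not push down to a well-defined family below. Overcoming this requires a genuinely transfinite construction, or a Zorn-type argument on coherent partial families of representatives with care taken at limit stages; this is the technical heart of the statement, and the part I expect to be delicate.
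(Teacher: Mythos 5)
Your reduction to a continuous cross-section of $q\colon G\to G/G_0$ is correct (a continuous bijection from a compact space onto a Hausdorff space is a homeomorphism), and the Lie-approximation setup of your second and third paragraphs is sound: with the $p_i$ surjective one does get $p_i(G_0)=(L_i)_0$, hence $G/G_0=\varprojlim \pi_0(L_i)$, and $p_{ij}$ does carry $q_j$-fibres onto $q_i$-fibres. But the argument stops exactly where the content of the theorem begins: the coherent family $(\tau_i)$ is never constructed. As you yourself note, the (nonempty, finite) sets of sections of the $q_i$ do \emph{not} form an inverse system, because a section chosen at a finer level does not induce a well-defined one at a coarser level when $\pi_0(L_j)\to\pi_0(L_i)$ fails to be injective; so the ``inverse limit of nonempty finite sets'' device is unavailable, and the promised transfinite or Zorn-type construction is only announced, not carried out. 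Declaring a step delicate is not the same as performing it, so as written this is a genuine gap at the heart of the proof.

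For context: the paper does not prove this statement either --- it imports it as Corollary 10.38 of Hofmann--Morris --- and the route taken there sidesteps your obstacle. One first produces a compact totally disconnected subgroup $D\leq G$ with $G=G_0D$ (the supplement theorem for the identity component established in the same chapter of that book); then $q$ restricted to $D$ is a surjective homomorphism of profinite groups $D\to D/(D\cap G_0)\cong G/G_0$, and a surjection of profinite groups always admits a continuous section (a standard Zorn's lemma fact). Composing that section with the inclusion $D\hookrightarrow G$ yields precisely the cross-section your first paragraph requires, so the real work is relocated to the group-theoretic supplement theorem rather than to a coherence argument over a directed index set. Note finally that in this paper the theorem is only ever applied to $\widehat{\Gamma}$ with $\Gamma$ countable, hence metrizable; there the inverse system may be taken to be a sequence and your bottom-up lifting of coset representatives does give a complete proof, so your argument suffices for the paper's purposes even though it does not establish the theorem in the stated generality.
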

The character  group of  a  countable discrete group $\Gamma$ is a
compact metrizable  group $\widehat{\Gamma}$ and the set of
characters that vanish on its torsion group, $t\Gamma$,  coincides
with the connected component  of $\widehat{\Gamma}$, in symbols
$t\Gamma^\perp=\widehat{\Gamma}_0$. Further, the duality between
discrete Abelian and compact Abelian groups   identifies
$\widehat{t\Gamma}$ with the quotient
$\widehat{\Gamma}/\widehat{\Gamma}_0$. It follows therefore from
Theorem \ref{split} that
\begin{equation}
\widehat{\Gamma} \sim \widehat{t\Gamma}\times
(t\Gamma)^{\perp}\label{eq-torsion2}
\end{equation}
and, hence, that  $C^\ast(\Gamma)$ is isometric to $C^\ast(t\Gamma
\oplus \Gamma/t\Gamma)$.

We now turn our attention to groups with splitting connected
component.
\subsection{The structure of unitary groups of certain commutative $C^\ast$-algebras}
We begin by noting that the additive structure of a commutative
$C^\ast$-algebra contains very little information on the algebra.
This fact will be useful in classifying unitary groups.
 \begin{theorem}[Milutin, see for instance Theorem III.D.18 of \cite{wojt}]\label{milutin}
If $K_1$ and $K_2$ are uncountable, compact metric spaces, then the
Banach  spaces $C(K_1,\C)$ and $C(K_2,\C)$ are topologically
isomorphic.
\end{theorem}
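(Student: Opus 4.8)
The plan is to establish the stronger statement that for \emph{every} uncountable compact metric space $K$ the Banach space $C(K,\C)$ is isomorphic to $C(\mathcal{D},\C)$, where $\mathcal{D}=\{0,1\}^{\N}$ denotes the Cantor set; the theorem then follows immediately by transitivity, since both $C(K_1,\C)$ and $C(K_2,\C)$ are identified with $C(\mathcal{D},\C)$. The driving tool will be Pe\l czy\'nski's decomposition method, so the real content is to verify its three hypotheses: that $C(\mathcal{D},\C)$ sits as a complemented subspace of $C(K,\C)$, that $C(K,\C)$ sits as a complemented subspace of $C(\mathcal{D},\C)$, and that $C(\mathcal{D},\C)$ is self-similar in the $c_0$ sense.

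For the first complementation I would use that an uncountable compact metric space contains a subset $\mathcal{D}'$ homeomorphic to $\mathcal{D}$ (a nonempty perfect closed subset exists by Cantor--Bendixson, and a compact perfect metric space contains a Cantor set). The restriction map $R\colon C(K,\C)\to C(\mathcal{D}',\C)$ is onto by Tietze, while the Borsuk--Dugundji simultaneous extension theorem supplies a bounded linear right inverse $E$ with $RE=\mathrm{id}$. Then $ER$ is a bounded projection of $C(K,\C)$ onto $E\bigl(C(\mathcal{D}',\C)\bigr)\cong C(\mathcal{D},\C)$, so $C(\mathcal{D},\C)$ is complemented in $C(K,\C)$.

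For the reverse complementation I would invoke the Alexandroff--Hausdorff theorem: every nonempty compact metric space is a continuous image of $\mathcal{D}$, say $\phi\colon\mathcal{D}\to K$. This induces an isometric embedding $\phi^{\ast}\colon C(K,\C)\to C(\mathcal{D},\C)$, $f\mapsto f\circ\phi$. To complement its range I need a bounded linear \emph{averaging operator} $u\colon C(\mathcal{D},\C)\to C(K,\C)$ with $u\circ\phi^{\ast}=\mathrm{id}$, for then $\phi^{\ast}u$ is the required projection. The existence of such a regular averaging operator --- equivalently, of a weak$^\ast$-continuous assignment $y\mapsto\mu_y$ of probability measures $\mu_y$ supported on the fibre $\phi^{-1}(y)$ --- is precisely Milutin's lemma, and its construction, which exploits the dyadic tree structure of $\mathcal{D}$ to build the fibre measures coherently, is the step I expect to be the main obstacle; the remaining ingredients are comparatively soft.

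Finally I would record the self-similarity $C(\mathcal{D},\C)\cong\bigl(\bigoplus_{n}C(\mathcal{D},\C)\bigr)_{c_0}$. This holds because the one-point compactification of a countable disjoint union of Cantor sets is again perfect, compact, totally disconnected and metrizable, hence homeomorphic to $\mathcal{D}$ by Brouwer's characterization, whereas its function space is $\C\oplus\bigl(\bigoplus_{n}C(\mathcal{D},\C)\bigr)_{c_0}$. With the two complementations and this $c_0$-decomposition in hand, Pe\l czy\'nski's method (using the self-similarity to absorb the complementary summands into the infinite $c_0$-sum) yields $C(K,\C)\cong C(\mathcal{D},\C)$, completing the proof.
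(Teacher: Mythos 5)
The paper does not prove this statement: it is imported verbatim as Milutin's theorem from Theorem III.D.18 of \cite{wojt}, so there is no internal proof to compare yours against. Your outline is precisely the standard proof given in that reference and in most textbooks --- realize $C(\mathcal{D},\C)$ as a complemented subspace of $C(K,\C)$ via Cantor--Bendixson and the Borsuk--Dugundji simultaneous extension operator, realize $C(K,\C)$ as a complemented subspace of $C(\mathcal{D},\C)$ via a Milutin surjection admitting a regular averaging operator, record the self-similarity $C(\mathcal{D},\C)\cong\bigl(\bigoplus_n C(\mathcal{D},\C)\bigr)_{c_0}$, and conclude by Pe\l czy\'nski's decomposition method. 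All of these ingredients are correctly identified and correctly assembled.

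Two caveats. First, as you yourself acknowledge, the crux --- Milutin's lemma --- is only named, not proved; since every other ingredient is soft, a proof that stops at ``this is the step I expect to be the main obstacle'' has not discharged the theorem, and the lemma genuinely requires the nontrivial construction of coherently varying fibre measures (first for the standard surjection $\mathcal{D}\to[0,1]$, then for the countable product $\mathcal{D}^{\N}\to[0,1]^{\N}$, then restricted to an embedding $K\hookrightarrow[0,1]^{\N}$). Second, a quantifier in your second step is off: it is not true that an arbitrary continuous surjection $\phi\colon\mathcal{D}\to K$ supplied by Alexandroff--Hausdorff admits a regular averaging operator. Milutin's lemma asserts that \emph{some} surjection onto $K$ admits one, and in the usual construction its domain is the preimage $L=\sigma^{-1}(K)$ of $K$ under the Milutin map $\sigma\colon\mathcal{D}\to[0,1]^{\N}$; this $L$ is a closed subset of $\mathcal{D}$ that need not be homeomorphic to $\mathcal{D}$, so one must apply Borsuk--Dugundji once more to complement $C(L,\C)$ in $C(\mathcal{D},\C)$ before the decomposition method can be run. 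Neither caveat affects the overall architecture, which is exactly that of the source the paper cites, but as written the proposal is an accurate roadmap rather than a complete proof.
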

\begin{lemma}\label{estrcdete}
Let $K$ and $D$ be  compact  topological spaces, $K$ connected and
$D$ totally disconnected. The following topological isomorphism then
holds: \begin{equation} \label{eq:estrcdete} C(K\times D,\T)\cong
C(K\times D ,\R) \times C(D,\T)\times \oplus_{w(D)}
\pi^1(K),\end{equation} where $w(D)$  denotes the topological weight
of $D$ \footnote{By the topological weight of a topological space
$X$ we mean, as usual, the least cardinal number of a basis of open
sets of $X$.}
\end{lemma}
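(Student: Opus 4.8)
The plan is to separate the two natural pieces of $C(K\times D,\T)$ supplied by Lemma \ref{basicc0}: its connected component $C^0(K\times D,\T)$ and the discrete cohomotopy quotient $\pi^1(K\times D)$. Since $C^0$ is open and splits, Lemma \ref{basicc0} already yields a topological isomorphism $C(K\times D,\T)\cong C^0(K\times D,\T)\oplus\pi^1(K\times D)$, so it suffices to identify each summand. I will show $C^0(K\times D,\T)\cong C(K\times D,\R)\times C(D,\T)$ and, separately, $\pi^1(K\times D)\cong\bigoplus_{w(D)}\pi^1(K)$; combining the two gives \eqref{eq:estrcdete}.

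For the connected component I use that $C^0(X,\T)$ is the range of the exponential map, so $C^0(X,\T)\cong C(X,\R)/C(X,\Z)$ topologically (the exponential is an open homomorphism onto its image with kernel $C(X,\Z)$). Because $K$ is connected, an integer-valued continuous function on $K\times D$ is constant on each slice $K\times\{d\}$, whence $C(K\times D,\Z)=C(D,\Z)$, realized as the functions independent of the $K$-coordinate. Now I invoke the exponential law $C(K\times D,\R)\cong C(D,C(K,\R))$ together with the splitting $C(K,\R)=\R\cdot 1\oplus E$, where $E:=\{g\in C(K,\R):g(k_0)=0\}$ for a fixed $k_0\in K$; this produces a topological direct sum $C(K\times D,\R)\cong C(D,\R)\oplus C(D,E)$ in which $C(D,\Z)$ lies inside the first summand. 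Passing to the quotient and using that $D$ is totally disconnected, so $\pi^1(D)=0$ and therefore $C(D,\R)/C(D,\Z)=C^0(D,\T)=C(D,\T)$, I obtain $C^0(K\times D,\T)\cong C(D,\T)\oplus C(D,E)$. The single Banach-space input needed is that $E$ absorbs a line, $E\cong\R\oplus E$; this is elementary, since an infinite compact $K$ forces $E$ to contain a complemented copy of $c_0$ (the degenerate case where $K$ is a point is trivial). Applying the functor $C(D,-)$ to $E\cong\R\oplus E$ gives $C(D,E)\cong C(D,\R)\oplus C(D,E)\cong C(K\times D,\R)$, which identifies the connected component as desired. (Should one prefer, Milutin's Theorem \ref{milutin} furnishes the same Banach-space identifications.)

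For the discrete factor I compute $\pi^1=[-,\T]\cong\check H^1(-;\Z)$ by means of the inverse-limit presentation $D=\varprojlim D_\alpha$ of the compact totally disconnected space $D$ by finite discrete spaces. Continuity of \v Cech cohomology gives $\pi^1(K\times D)\cong\varinjlim\pi^1(K\times D_\alpha)$, and since $K\times D_\alpha$ is a disjoint union of $|D_\alpha|$ copies of $K$ one has $\pi^1(K\times D_\alpha)\cong\pi^1(K)^{|D_\alpha|}$. In the limit this is the group $C(D,\pi^1(K))$ of locally constant functions (with $\pi^1(K)$ discrete), isomorphic to $\pi^1(K)\otimes_\Z C(D,\Z)$. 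As both $\pi^1(K\times D)$ and $\bigoplus_{w(D)}\pi^1(K)$ are discrete, only an algebraic isomorphism is required for this summand.

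The step I expect to be the real obstacle is the final identification $C(D,\pi^1(K))\cong\bigoplus_{w(D)}\pi^1(K)$, equivalently the assertion that $C(D,\Z)$ is free abelian of rank exactly $w(D)$; this is precisely where the hypothesis on $D$ is exploited. Writing $C(D,\Z)=\varinjlim_{\mathcal P}\Z^{\mathcal P}$ over finite clopen partitions $\mathcal P$, each refinement inclusion splits with free complement, so the direct limit is free, and counting the basis elements contributed by the clopen algebra of $D$ pins the rank down to $w(D)$; tensoring with $\pi^1(K)$ then gives the stated sum. The remainder is careful bookkeeping to ensure that all the splittings are topological and compatible with the decomposition of Lemma \ref{basicc0}.
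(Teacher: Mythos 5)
Your argument is correct in substance and assembles the same three factors as the paper, but it reorders the two main steps in a way that costs you one extra ingredient. The paper first applies the exponential law, $C(K\times D,\T)\cong C(D,C(K,\T))$, and only then splits the \emph{inner} group by Lemma \ref{basicc0} as $C^0(K,\T)\oplus\pi^1(K)$ with $C^0(K,\T)\cong C(K,\R)\times\T$; applying $C(D,-)$ to that splitting produces all three factors at once, and in particular the discrete factor appears directly as $C(D,\pi^1(K))$, so $\pi^1(K\times D)$ never has to be computed. You instead split $C(K\times D,\T)$ itself and must therefore identify $\pi^1(K\times D)$ with $C(D,\pi^1(K))$, which you do via $\check{H}^1$ and continuity along $D=\varprojlim D_\alpha$ --- correct, but an avoidable detour. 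From there the two proofs coincide: both reduce the last factor to the freeness and rank of $C(D,\Z)$ (the paper cites \cite{edaohtakamo} for exactly this), and both need the same Banach-space absorption, your $E\cong\R\oplus E$ being literally the paper's unproved assertion that $C(K,\R)\cong C_\bullet(K,\R)$. One caveat on your justification of that step: it is \emph{not} true that every infinite compact $K$ forces a complemented copy of $c_0$ into $C(K,\R)$ --- for a connected compact $F$-space such as $\beta[0,\infty)\setminus[0,\infty)$ the space $C(K,\R)$ is a Grothendieck space and admits no complemented $c_0$, and by Koszmider's examples $C(K)$ need not even be isomorphic to its hyperplanes. The claim does hold whenever $K$ contains a nontrivial convergent sequence, in particular for the metrizable $K$ to which the lemma is actually applied (and Milutin's theorem, which you offer as an alternative, also covers that case), so your proof is sound where it matters; but for arbitrary compact connected $K$ this point is a genuine gap shared with the paper's own proof. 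Finally, the case $K=\{\mathrm{pt}\}$ is not ``trivial'' but false as stated (with $D$ also a point the claim reads $\T\cong\R\times\T$); like the paper, you are tacitly assuming $K$ nondegenerate.
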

\begin{proof}
We first observe that  $C(K\times D,\T)$ is topologically isomorphic
to $C(D,C(K,\T))$.
 From Lemma  \ref{basicc0} we deduce that
\begin{equation}\label{eqcs} C(K\times D,\T) \cong C(D,C^0(K,\T))\times C(D,\pi^1(K)).
\end{equation}
There is a topological isomorphism from the  Banach space $C(K,\R)$
onto the Banach space $C_\bullet(K,\R)$ of functions sending $0$ to
0. It is now easy to check that the mapping $(f,t)\mapsto t\cdot
\exp( f)$
 identifies $C_\bullet(K,\R)\times \T$ with $C^0(K,\T)$ and hence
\[ C^0(K,\T) \cong C(K,\R) \times \T.\]
Along with \eqref{eqcs} we  obtain
\[ C(K\times D,\T)\cong C(D,C(K,\R)\times \T) \times C(D,\pi^1(K))=C(D\times K,\R)\times
 C(D,\T)\times C(D,\pi^1(K)) .\]
Now $\pi^1(K)$ is a discrete group and each element of
$C(D,\pi^1(K))$  determines an  open and closed subset of $D$. An
analysis identical to that of \cite{edaohtakamo} for $C(X,\Z)$ then
yields
\[ C(D,\pi^1(K))\cong \oplus_{w(D)} \pi^1(K),\]
and the proof follows.
\end{proof}

The following lemma can be found as  Exercise E.8.14  in
\cite{hoffmorr}.
\begin{lemma}\label{conntd}
If $D$ is a totally disconnected compact space, $C(D,\T)=C^0(D,\T)$
and $C(D,\T)$  is connected.
\end{lemma}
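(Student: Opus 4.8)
The plan is to reduce everything to the single assertion that $C(D,\T)=C^0(D,\T)$. Once this is established, connectedness of $C(D,\T)$ is immediate: by the discussion preceding Lemma \ref{basicc0}, $C^0(D,\T)$ is by definition the connected component of the identity of $C(D,\T)$, and a topological group that coincides with the connected component of its identity is connected. In view of Lemma \ref{basicc0}, proving $C(D,\T)=C^0(D,\T)$ amounts to showing that every continuous $f\colon D\to\T$ lies in the range of the exponential map $\exp\colon C(D,\R)\to C(D,\T)$; equivalently, that every such $f$ admits a continuous real lift.

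The construction of such a lift is where the hypotheses on $D$ enter, and this is the step I expect to carry the whole argument. First I would invoke the standard fact that a compact totally disconnected space is zero-dimensional, and hence has a basis of clopen sets. Fixing $f$ and a point $x\in D$, the continuity of $f$ together with this clopen basis produces a clopen neighbourhood $W_x$ of $x$ whose image $f(W_x)$ is contained in a proper open arc of $\T$ (one on which a continuous branch of the argument is defined). By compactness of $D$, finitely many such neighbourhoods $W_{x_1},\dots,W_{x_n}$ cover $D$, and disjointifying them, say $U_1=W_{x_1}$ and $U_i=W_{x_i}\setminus\bigcup_{j<i}W_{x_j}$, yields a finite clopen partition $D=\bigsqcup_{i=1}^{n}U_i$ with each $f(U_i)$ contained in a proper arc.

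On each piece $U_i$ I would then compose $f|_{U_i}$ with the branch of the argument defined on the arc containing $f(U_i)$, obtaining a continuous $g_i\in C(U_i,\R)$ with $\exp(g_i)=f|_{U_i}$. Because the sets $U_i$ are \emph{clopen} and pairwise disjoint, the function $g\colon D\to\R$ defined piecewise by $g|_{U_i}=g_i$ is continuous—there are no boundary points between distinct clopen pieces at which continuity could fail—and it satisfies $\exp(g)=f$. Hence $f\in C^0(D,\T)$, which is exactly what we wanted. The only delicate point is the clopen partitioning; everything afterward is routine. It is precisely the combination of total disconnectedness and compactness that makes this work: total disconnectedness supplies the clopen basis needed to isolate $f$ into small arcs, while compactness reduces an a priori infinite cover to a finite clopen partition across which the local branches glue without any cohomotopy obstruction.
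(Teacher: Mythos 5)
Your argument is correct and complete. Note that the paper itself offers no proof of this lemma: it simply cites Exercise E.8.14 of \cite{hoffmorr}, so there is no "paper's proof" to compare against; what you have written is the standard direct argument that the citation presumably intends. All the steps check out: a compact Hausdorff totally disconnected space is indeed zero-dimensional, so it has a clopen basis; pulling back proper open arcs of $\T$ through $f$ and shrinking to basic clopen sets, then using compactness and the disjointification $U_i=W_{x_i}\setminus\bigcup_{j<i}W_{x_j}$, produces a finite clopen partition on each piece of which a continuous branch of the argument exists; and gluing over a finite clopen (hence open) partition preserves continuity, giving $g\in C(D,\R)$ with $\exp(g)=f$. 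Since the paper identifies $C^0(D,\T)$ with the range of $\exp$ (the remark preceding Lemma \ref{basicc0}), this yields $C(D,\T)=C^0(D,\T)$, and connectedness follows because $C^0(D,\T)$ is by definition the identity component. The only hypothesis you are using implicitly is that $D$ is Hausdorff, which is the standing assumption throughout the paper. For comparison, an alternative route consistent with the paper's machinery would be to observe that $\pi^1(D)=C(D,\T)/C^0(D,\T)$ is the first \v{C}ech cohomology group $\check{H}^1(D;\Z)$, which vanishes for zero-dimensional compact spaces; your argument is more elementary and self-contained, at the cost of being specific to the circle.
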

%\begin{proof}
%It has to be proved that every continuous function from $D$ to $\T$
%lifts across $\exp:\R\rightarrow \T$, that is to say that there
%exists a continuous function $f':D\rightarrow \R$ such that
%$f=exp\circ f'$. The construction of $f'$ lays on the search for a
%continuous cross section $s$ for the projection $p_1:E\rightarrow
%D$, with $E:=\{ (x,r)\in D\times \mathbb{R}:\ f(x)=\exp(r)\}$, such
%that $f'=p_2\circ s$, where $p_2$ projects $E$ to $\R$.
%\end{proof}

%\begin{
\begin{theorem}\label{esp-teocomp} Let  $K_i$, $i=1,2$,  be two compact connected metrizable
spaces and let  $D_i$, $i=1,2$, be two compact totally disconnected
metrizable spaces. Defining  $X=K_1\times D_1$ and $Y=K_2\times
D_2$,the following assertions are equivalent.
\begin{enumerate}
\item $C(X,\torus)\cong C(Y,\torus)$.
\item
\begin{itemize}
\item[(a)] $\bigoplus_{w(D_1)}\pi^{1}(K_1)\cong \bigoplus_{w(D_2)}\pi^{1}(K_2)$,
where $w(D_1)$ and $w(D_2)$ are the topological weights of $D_1$ and
$D_2$, respectively, and

\item[(b)] $C(D_1,\torus)\cong C(D_2,\torus)$.
\end{itemize}\end{enumerate}
\end{theorem}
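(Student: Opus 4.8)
The plan is to reduce everything to the normal form supplied by Lemma \ref{estrcdete}. Applying it to both spaces gives
\[ C(X,\T)\cong C(X,\R)\times C(D_1,\T)\times \bigoplus_{w(D_1)}\pi^1(K_1),\qquad C(Y,\T)\cong C(Y,\R)\times C(D_2,\T)\times \bigoplus_{w(D_2)}\pi^1(K_2), \]
so the whole theorem amounts to comparing these two triple products. Three structural features of the factors organize the argument: $C(X,\R)$ is a connected, simply connected Banach space; $C(D_i,\T)$ is connected by Lemma \ref{conntd} and is where all torsion sits; and $\bigoplus_{w(D_i)}\pi^1(K_i)$ is discrete. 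Consequently the identity component of $C(X,\T)$ is exactly the partial product $C(X,\R)\times C(D_1,\T)$, while the component group $C(X,\T)/C(X,\T)_0$ is the discrete group $\bigoplus_{w(D_1)}\pi^1(K_1)$ (and similarly for $Y$).

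For $(2)\Rightarrow(1)$ I would simply match the three factors. Hypothesis (b) identifies the factors $C(D_i,\T)$ outright, and hypothesis (a) identifies the discrete factors. For the Banach factors I would invoke Milutin's theorem (Theorem \ref{milutin}): since $K_1,K_2$ are connected compact metric spaces with more than one point (the one-point case being degenerate and trivial), they are uncountable, hence so are $X$ and $Y$, and therefore $C(X,\R)\cong C(Y,\R)$ as Banach spaces, a fortiori as topological groups. The product of the three isomorphisms then yields $C(X,\T)\cong C(Y,\T)$. This is precisely why (2) carries no condition on $C(X,\R)$: under the standing hypotheses that factor is determined automatically.

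For the converse $(1)\Rightarrow(2)$, part (a) is the easy half, since the component group is a topological invariant: by the identification above, $(1)$ forces $\bigoplus_{w(D_1)}\pi^1(K_1)\cong\bigoplus_{w(D_2)}\pi^1(K_2)$. The real work is extracting (b), that is, disentangling $C(D_i,\T)$ from the Banach factor inside the identity component $C(X,\R)\times C(D_1,\T)$. My plan here is to pass to universal covering groups. Each $C(X_i,\T)_0$ is locally isomorphic to a Banach space (near the identity $\exp$ is a homeomorphism, by Lemma \ref{basicc0}), hence locally contractible, so it has a universal covering group; because $C(X,\R)$ is already simply connected and $C(D_i,\T)=C(D_i,\R)/C(D_i,\Z)$ is covered by $C(D_i,\R)$ via $\exp$, that universal cover is the Banach space $C(X_i,\R)\times C(D_i,\R)=C(X_i\sqcup D_i,\R)$, with deck group $\pi_1\cong C(D_i,\Z)$ sitting as $0\times C(D_i,\Z)$. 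A topological group isomorphism $C(X,\T)_0\cong C(Y,\T)_0$ therefore lifts to a topological group isomorphism $T$ of these covering Banach spaces carrying $C(D_1,\Z)$ onto $C(D_2,\Z)$; being a continuous additive bijection between Banach spaces, $T$ is automatically $\R$-linear.

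The payoff, and the step I expect to be the main obstacle, is recognizing $C(D_i,\R)$ intrinsically from the fundamental group: the closed $\R$-linear span of $C(D_i,\Z)$ inside $C(X_i\sqcup D_i,\R)$ is exactly the summand $0\times C(D_i,\R)$, since integer-valued continuous functions on the totally disconnected space $D_i$ are locally constant and their real linear combinations are dense in $C(D_i,\R)$ by Stone--Weierstrass, while they meet the $C(X_i,\R)$ summand in $0$. As $T$ is linear and bicontinuous it preserves closed spans, so $T(C(D_1,\R))=C(D_2,\R)$ with $T(C(D_1,\Z))=C(D_2,\Z)$, and passing to the quotient gives $C(D_1,\T)\cong C(D_2,\T)$, which is (b). I expect this disentangling to be the crux precisely because the naive route — cancelling the common-looking Banach factor directly — is unavailable, there being no cancellation law for topological groups; the universal cover converts the problem into a linear one where the span of $\pi_1$ pins down the relevant subspace.
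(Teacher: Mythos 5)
Your proof is correct, and for the hard step it takes a genuinely different route from the paper's. Both arguments coincide up to a point: reduce to the three-factor form of Lemma \ref{estrcdete}, get (2)$\Rightarrow$(1) from Milutin's theorem plus factorwise isomorphisms, and get (a) by passing to the discrete component group. The divergence is in extracting (b) from the induced isomorphism $H$ of the identity components $C(X,\R)\times C(D_1,\T)\cong C(Y,\R)\times C(D_2,\T)$. The paper dualizes: the character group of the Banach-space factor is divisible, while the character group of $C(D_i,\T)$ is the free Abelian group $A(D_i)$, which contains no divisible subgroup, so $H$ must match the Banach factors at the level of characters and one concludes by taking quotients and invoking the duality between $C(D,\T)$ and $A(D)$. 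You instead lift $H$ to the universal covering groups $C(X,\R)\times C(D_1,\R)$, note the lift is automatically $\R$-linear, and recognize $\{0\}\times C(D_i,\R)$ intrinsically as the closed linear span of the deck group $C(D_i,\Z)$ (via Stone--Weierstrass on the zero-dimensional $D_i$). Both routes isolate the same subgroup by the same underlying dichotomy (divisible versus free data attached to $D_i$), but yours replaces the duality facts for $C(X,\T)$ and $C(X,\R)$ with standard covering-space theory and linearity of continuous additive maps, and is arguably more self-contained; the paper's is shorter once those duality computations are quoted. (Both proofs, yours explicitly and the paper's parenthetically, need $K_i$ nontrivial for the Milutin step.)
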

\begin{proof}
It is obvious from Theorem \ref{milutin} (observe that $K_i\times
D_i$ is uncountable as soon as $K_i$ is nontrivial) and Lemma
\ref{estrcdete} that (2) implies (1).

We now use the decomposition of Lemma \ref{estrcdete} to deduce (2)
from (1). Assertion (a) follows from factoring out  connected
components in \eqref{eq:estrcdete} (note that  $C(K_i\times
D_i,\R)\times C(D_i,\T)$ is connected, use Lemma \ref{conntd} for
$C(D_i,\T)$). The connected components $C(K_1\times D_1,\R)\times
C(D_1,\T)$ and $C(K_2\times D_2,\R)\times C(D_2,\T)$ will be
topologically isomorphic as well. Let $H\colon C(K_1\times
D_1,\R)\times C(D_1,\T)\to C(K_2\times D_2,\R)\times C(D_2,\T)$
denote this isomorphism.

Consider now the homomorphism $\widehat{H} \colon C(K_2\times
D_2,\R)^{\;\widehat{\;}\;}\times C(D_2,\T)^{\;\widehat{\;}\;}\to
C(K_1\times D_1,\R)^{\;\widehat{\;}\;}\times
C(D_1,\T)^{\;\widehat{\;}\;} $ that results from dualizing $H$.

When  $D$ is a totally disconnected compact group, the only
continuous characters of   $C(D,\T)$ are linear combinations with
coefficients in $\Z$ of evaluations of elements of $D$, i.e., the
group $C(D,\T)^{\;\widehat{\;}\;} $ is isomorphic to the free
Abelian  group $A(D)$  on $D$ \cite{pest95} (see \cite{galihern99fo}
for more on the duality between $C(X,\T)$ and $A(X)$ based on the
exact sequence in Lemma \ref{basicc0})).

There is on the other hand  a well-known isomorphism between
$C(K_1\times D_1,\R)^{\;\widehat{\;}\;}$ and the additive group of
the vector space of all continuous linear functionals on
$C(K_1\times D_1,\R)$. The group  $C(K_1\times
D_1,\R)^{\;\widehat{\;}\;}$ is therefore a divisible.

Since free Abelian  groups, such as  $A(D_i)$, do not contain any
divisible subgroup, $\widehat{H}(C(K_1\times
D_1,\R)^{\;\widehat{\;}\;}$  must equal $C(K_2\times
D_1,\R)^{\;\widehat{\;}\;}$. We deduce thus, taking quotients, that
$C(D_1,\T)$ and $C(D_2,\T)$ are topologically isomorphic.
\end{proof}
\subsection{The group case}
We now specialize the results in the previous paragraphs for the
case of a compact Abelian group.

When $T$ is a torsion discrete Abelian group, $\widehat{T}$ is a
compact totally disconnected group and hence  homeomorphic to the
Cantor
 set. The group $C^\ast$-algebras of all  countably infinite  torsion Abelian groups
 will therefore be isometric. These facts are summarized in the
 following lemma.
\begin{lemma}\label{tor}
Let $T_1$ and $T_2$ be countable torsion discrete Abelian groups.
Then the following assertions are equivalent:
\begin{enumerate}
\item The group $C^{\ast}$-algebras $C^{\ast}(T_1)$ and $C^{\ast}(T_2)$ are
isomorphic as $C^{\ast}$-algebras.
\item The unitary groups of $C^{\ast}(T_1)$ and $C^{\ast}(T_2)$ are topologically
isomorphic.
\item The compact  groups $\widehat{T_1}$ and $\widehat{T_2}$ are
homeomorphic.
\item The groups $T_1$ and $T_2$ have the same cardinal. % es
%finito si, y sólo si, $T_2$ lo es, y en ese caso, ambos grupos
%tienen el mismo número de elementos.
%$|T_1|=|T_2|$.
%El cardinal de los espectros de
%$C^{\ast}(D_1)$ y $C^{\ast}(T_2)$ coincide, i.e.,
%$|\widehat{D}_1|=|\widehat{D}_2|$.
\end{enumerate}\label{cedete-discr}
\end{lemma}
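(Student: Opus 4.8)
The plan is to establish the four assertions cyclically, in the order $(4)\Rightarrow(3)\Rightarrow(1)\Rightarrow(2)\Rightarrow(4)$, using throughout the identifications $C^\ast(T_i)\cong C(\widehat{T_i},\C)$ and $\U(C^\ast(T_i))\cong C(\widehat{T_i},\T)$ recalled in the Background. The fact to keep in mind is that each $\widehat{T_i}$ is compact, metrizable and totally disconnected, so it is either a finite discrete space (when $T_i$ is finite) or a copy of the Cantor set (when $T_i$ is countably infinite). This dichotomy drives the whole argument.

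The implications $(4)\Rightarrow(3)\Rightarrow(1)\Rightarrow(2)$ are the routine ones. For $(4)\Rightarrow(3)$: if $|T_1|=|T_2|$ then either both groups are finite and $\widehat{T_1},\widehat{T_2}$ are finite discrete spaces of equal cardinality, hence homeomorphic, or both are countably infinite and $\widehat{T_1},\widehat{T_2}$ are both Cantor sets, hence homeomorphic by Brouwer's characterization. For $(3)\Rightarrow(1)$: a homeomorphism $h\colon\widehat{T_1}\to\widehat{T_2}$ induces the $C^\ast$-isomorphism $f\mapsto f\circ h$ between $C(\widehat{T_2},\C)$ and $C(\widehat{T_1},\C)$, which are $C^\ast(T_2)$ and $C^\ast(T_1)$. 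Finally $(1)\Rightarrow(2)$ is immediate, since any isomorphism of $C^\ast$-algebras restricts to a topological isomorphism of their unitary groups.

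The substantive step, and the one I expect to be the main obstacle, is $(2)\Rightarrow(4)$: I must recover the cardinal of $T_i$ from the bare topological group structure of $C(\widehat{T_i},\T)$. Here the winding-number quotient $\U/\U_0$ is useless, since $C(\widehat{T_i},\T)$ is connected in every case (it is a torus $\T^{|T_i|}$ when $T_i$ is finite, and is connected by Lemma \ref{conntd} when $T_i$ is infinite). Instead I would separate finite from infinite by \emph{compactness}: $C(X,\T)$ is compact precisely when $X$ is finite. Indeed, for finite $X$ with $n$ points $C(X,\T)=\T^{n}$ is compact, whereas for infinite compact metric $X$ one can pick a convergent sequence of distinct points, surround a suitable subsequence of them by pairwise disjoint open balls, and use Urysohn's lemma to produce $\T$-valued maps that are pairwise at uniform distance $2$; this exhibits an infinite $2$-separated set, so $C(X,\T)$ is not totally bounded, hence not compact. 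Since a topological group isomorphism is a homeomorphism it preserves compactness, so $(2)$ forces $T_1$ and $T_2$ to be simultaneously finite or simultaneously infinite. In the infinite case both cardinals equal $\aleph_0$ and $(4)$ holds; in the finite case the isomorphism reads $\T^{|T_1|}\cong\T^{|T_2|}$, and passing to Pontryagin duals (or comparing topological dimensions) yields $\Z^{|T_1|}\cong\Z^{|T_2|}$, whence $|T_1|=|T_2|$. The only genuine care needed is the non-total-boundedness construction establishing non-compactness of $C(X,\T)$ for infinite $X$; everything else is formal.
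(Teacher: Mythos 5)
Your proposal is correct, and it follows the same underlying idea the paper relies on: the paper gives no written proof of this lemma, presenting it as a summary of the preceding observation that $\widehat{T}$ is compact, metrizable and totally disconnected, hence a Cantor set when $T$ is countably infinite. Your chain $(4)\Rightarrow(3)\Rightarrow(1)\Rightarrow(2)$ is exactly that observation made explicit (Brouwer's characterization, Gelfand duality, restriction to unitaries). The part you add that the paper leaves entirely unsaid is the closing implication $(2)\Rightarrow(4)$, and your compactness argument for it is sound: $C(X,\T)=\T^{n}$ is compact for finite $X$, while for infinite compact metric $X$ the bump functions $f_n=\exp(\pi i g_n)$ supported in pairwise disjoint balls around distinct points $x_n$ satisfy $\|f_n-f_m\|_\infty=2$, so $C(X,\T)$ is not totally bounded; compactness is a topological-group invariant, and the finite case is settled by dualizing $\T^{|T_1|}\cong\T^{|T_2|}$ to $\Z^{|T_1|}\cong\Z^{|T_2|}$. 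Two small points worth making explicit: (i) to invoke Brouwer you need $\widehat{T}$ perfect, which follows because an infinite compact group has no isolated points (translation homogeneity plus compactness); (ii) total disconnectedness of $\widehat{T}$ is the paper's earlier remark that the connected component of $\widehat{\Gamma}$ is $(t\Gamma)^{\perp}$, which is trivial when $\Gamma$ is torsion. Neither is a gap, just a line each to add.
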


\noindent Hence, the main result asserts:
\begin{theorem}
Let $\Gamma_1$ and $\Gamma_2$ be countable  discrete Abelian groups.
The following are equivalent:
\begin{enumerate}
\item \label{1}The unitary groups of $C^{\ast}(\Gamma_1)$ and $C^{\ast}(\Gamma_2)$ are
topologically isomorphic.

\item \label{2}
$|t\Gamma_1|=|t\Gamma_2|=\alpha$  and
\[ \bigoplus_{\alpha}\frac{\Gamma_1}{t\Gamma_1}\cong
\bigoplus_{\alpha}\frac{\Gamma_2}{t\Gamma_2}.\]
\end{enumerate}
\label{teo-grupos-disc-num}
\end{theorem}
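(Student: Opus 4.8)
The plan is to reduce the statement to Theorem \ref{esp-teocomp} by realizing each $\widehat{\Gamma_i}$ as a product of a connected and a totally disconnected factor, and then to translate the two resulting topological invariants back into the group-theoretic data appearing in assertion \eqref{2}. Throughout I identify $\U(C^\ast(\Gamma_i))$ with $C(\widehat{\Gamma_i},\torus)$, as explained in the Background section. Since $\Gamma_i$ is countable, $\widehat{\Gamma_i}$ is compact metrizable, and by \eqref{eq-torsion2} it is homeomorphic to $K_i\times D_i$, where $K_i=(t\Gamma_i)^{\perp}=\widehat{\Gamma_i/t\Gamma_i}$ is compact connected metrizable and $D_i=\widehat{t\Gamma_i}$ is compact totally disconnected metrizable. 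Thus $C(\widehat{\Gamma_1},\torus)\cong C(\widehat{\Gamma_2},\torus)$ holds if and only if conditions (a) and (b) of Theorem \ref{esp-teocomp} hold for these factors, and the whole argument amounts to rewriting (a) and (b) in the language of \eqref{2}.

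Next I would identify the two invariants occurring in (a) and (b). For the connected factor, the connected component of $C(K_i,\torus)=\U(C^\ast(\Gamma_i/t\Gamma_i))$ is exactly $C^0(K_i,\torus)$ (as recalled before Lemma \ref{basicc0}), so $\pi^1(K_i)=C(K_i,\torus)/C^0(K_i,\torus)$ is the quotient of that unitary group by its connected component; since $\Gamma_i/t\Gamma_i$ is torsion-free, Theorem \ref{index} gives $\pi^1(K_i)\cong \Gamma_i/t\Gamma_i$. For the totally disconnected factor, Pontryagin duality yields $\widehat{D_i}\cong t\Gamma_i$, and since the topological weight of a compact Abelian group equals the cardinality of its discrete dual, $w(D_i)=|t\Gamma_i|$. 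Finally, condition (b) reads $C(D_1,\torus)\cong C(D_2,\torus)$, i.e. $\U(C^\ast(t\Gamma_1))\cong \U(C^\ast(t\Gamma_2))$, which by Lemma \ref{tor} is equivalent to $|t\Gamma_1|=|t\Gamma_2|$.

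With these identifications both implications follow by assembling the pieces in the correct order. For \eqref{2}$\Rightarrow$\eqref{1}, the hypothesis $|t\Gamma_1|=|t\Gamma_2|=\alpha$ forces $w(D_1)=w(D_2)=\alpha$ and gives (b) through Lemma \ref{tor}; substituting $\pi^1(K_i)\cong\Gamma_i/t\Gamma_i$ and $w(D_i)=\alpha$ turns (a) into $\bigoplus_\alpha \Gamma_1/t\Gamma_1\cong\bigoplus_\alpha\Gamma_2/t\Gamma_2$, which is the second half of \eqref{2}, so Theorem \ref{esp-teocomp} yields \eqref{1}. For \eqref{1}$\Rightarrow$\eqref{2}, Theorem \ref{esp-teocomp} supplies both (a) and (b); (b) together with Lemma \ref{tor} gives $|t\Gamma_1|=|t\Gamma_2|=:\alpha$, whence $w(D_i)=\alpha$, and reading (a) with this common value and with $\pi^1(K_i)\cong\Gamma_i/t\Gamma_i$ produces the required isomorphism $\bigoplus_\alpha \Gamma_1/t\Gamma_1\cong\bigoplus_\alpha\Gamma_2/t\Gamma_2$.

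The genuinely delicate point is the interplay between (a) and (b): condition (a) is phrased in terms of the abstract weights $w(D_1),w(D_2)$, which are not manifestly comparable until one has first extracted $|t\Gamma_1|=|t\Gamma_2|$ from (b). The plan therefore deliberately invokes (b) before simplifying (a), so that the common cardinal $\alpha$ can be inserted as the index set of the direct sums. The two supporting translations, namely $\pi^1(K_i)\cong\Gamma_i/t\Gamma_i$ via the index theorem and $w(D_i)=|t\Gamma_i|$ via duality, are the steps that must be handled with care; once they are in place, the theorem is a formal consequence of Theorem \ref{esp-teocomp}.
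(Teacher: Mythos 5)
Your proposal is correct and follows essentially the same route as the paper: decompose $\widehat{\Gamma_i}$ as $(t\Gamma_i)^{\perp}\times\widehat{t\Gamma_i}$ via \eqref{eq-torsion2}, apply Theorem \ref{esp-teocomp}, and translate condition (b) through Lemma \ref{tor} and condition (a) through the identification $\pi^1((t\Gamma_i)^\perp)\cong\Gamma_i/t\Gamma_i$ given by Theorem \ref{index}. Your explicit remark that $w(\widehat{t\Gamma_i})=|t\Gamma_i|$ by duality is a point the paper leaves implicit, but the argument is otherwise the same.
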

\begin{proof}
Using  the homeomorphic identification in \eqref{eq-torsion2}, page
\pageref{eq-torsion2}, and Lemma \ref{estrcdete} we have:
\begin{equation}\label{basrel}
\mathcal{U}(C^{\ast}(\Gamma_i))\cong C( \widehat{ t\Gamma_i}\times
(t\Gamma_i)^{\perp},\T\bigr)\cong  C(\widehat{t\Gamma_i}\times
(t\Gamma_i)^\perp,\R)\times C(\widehat{t\Gamma_i},\T)\times
\bigoplus_{w(\widehat{t\Gamma_i})}
\pi^1((t\Gamma_i)^\perp),\end{equation} where $(t\Gamma_i)^{\perp}$
are compact connected and $ \widehat{t\Gamma_i}$ are compact totally
disconnected Abelian groups.

Suppose first that $\mathcal{U}(C^{\ast}(\Gamma_1))$ and
$\mathcal{U}(C^{\ast}(\Gamma_2))$ are topologically isomorphic. By
Theorem \ref{esp-teocomp}, $C( \widehat{t\Gamma_1},\T)$ is
topologically isomorphic to $C(\widehat{ t\Gamma_2},\T)$. It follows
from  Lemma \ref{tor} that $\widehat{t\Gamma_1}$ and
$\widehat{t\Gamma_2}$ are homeomorphic. Let
$\alpha=w(\widehat{t\Gamma_1})$. By statement (a) of  Theorem
\ref{esp-teocomp},
\begin{displaymath}
\bigoplus_{\alpha} \pi^{1}((t\Gamma_1)^{\perp})\cong
\bigoplus_{\alpha}
\pi^{1}((t\Gamma_2)^{\perp}),% \ \text{ y }\ C((t\Gamma_1)^{\Lwedge},\torus)\cong
%C((t\Gamma_2)^{\Lwedge},\torus),
\end{displaymath}
Now $\pi^1(t\Gamma_i^\perp)$ is isomorphic  by Theorem \ref{index}
to the torsion-free  group $\Gamma_i /t(\Gamma_i)$. The above
isomorphism thus becomes
\begin{equation}
\bigoplus_{\alpha} \left (\frac{\Gamma_1}{t\Gamma_1}\right)\cong
\bigoplus_{\alpha}
\left(\frac{\Gamma_2}{t\Gamma_2}\right)\label{eq-gamma}
\end{equation}
and we are done.

Suppose conversely that assertion \eqref{2} holds. We have then from
Lemma \ref{tor} that $C(\widehat{t\Gamma_1},\T)$ and
$C(\widehat{t\Gamma_1},\T)$ are topologically isomorphic.

On the other hand, the isomorphism
$\bigoplus_{\alpha}\frac{\Gamma_1}{t\Gamma_1}\cong
\bigoplus_{\alpha}\frac{\Gamma_2}{t\Gamma_2}$ implies, by way of
Theorem \ref{index}, that $\oplus_\alpha\pi^1((t\Gamma_1)^\perp)$ is
isomorphic to $\oplus_\alpha\pi^1((t\Gamma_2)^\perp)$.

It follows then from Theorem \ref{esp-teocomp} that
$C(\widehat{\Gamma_1},\T)$ and $C(\widehat{\Gamma_2},\T)$, that is
$\mathcal{U}(C^{\ast}(\Gamma_1))$ and
$\mathcal{U}(C^{\ast}(\Gamma_2))$, are topologically isomorphic.
\end{proof}
\section{Concluding remarks}\label{concl}
Theorem \ref{index} shows how strongly the topological group
structure of $\mathcal{U}(\A)$ may  happen to determine   a
$C^\ast$-algebra $\A$. Theorem \ref{teo-grupos-disc-num} then
precises the amount of
 information on $\A$ that is encoded in $\mathcal{U}(\A)$, for the case of a commutative group
$C^\ast$-algebra. This reveals some limitations on the strength of
$\mathcal{U}(\A)$ as an invariant of $\A$ that will be made concrete
in this Section.
%Since Milutin's theorem implies that $  C(\widehat{t\Gamma_1}\times
%(t\Gamma_1)^\perp,\R)$ is topologically isomorphic to
%$C(\widehat{t\Gamma_2}\times (t\Gamma_2)^\perp,\R)$, the
%isomorphisms in \eqref{basrel} show that
%$\mathcal{U}(C^{\ast}(\Gamma_1))$ and
%$\mathcal{U}(C^{\ast}(\Gamma_2))$ are topologically isomorphic.

$\,$From Theorem \ref{index} and Lemma  \ref{tor} we have that
$C^{\ast}(\Gamma)$ is completely determined  by its unitary group
when $\Gamma$ is either torsion-free or a torsion group. This is not
the case if $\Gamma$ is a mixed group.
 \begin{example}\label{noinso}
Two nonisometric Abelian group $C^\ast$-algebras with topologically
isomorphic unitary groups. \end{example}
\begin{proof} Let $\Gamma_1$ and $\Gamma_2$ be the groups in
Theorem \ref{teo-fuchs}. Define $\Delta_i=\Gamma_i\oplus \Z_2$.
Identifying as usual $C(\Delta_i,\T)$ with $\UDIi$ and applying
Lemma \ref{estrcdete}, we have that
\[\UDIi\cong C(\Delta_i,\R)\times \T^2 \times
(\Gamma_i\oplus \Gamma_i).\] The election of $\Gamma_i$  and
Milutin's theorem show that $\UDI$ is topologically isomorphic to
$\UDII$.

The algebras $C^\ast(\Delta_1)$ and $C^\ast(\Delta_2)$ are not
isometric, since their spectra, $\widehat{\Gamma_1}\times \Z_2$ and
$\widehat{\Gamma_2}\times \Z_2$, are not homeomorphic (their
connected components are not homeomorphic).
 \end{proof}
This  example also shows that simple "duplications" of torsion-free
groups are not determined by the unitary groups of their
$C^\ast$-algebras:
\begin{example}\label{rank}
Two nonisomorphic torsion-free Abelian groups $\Gamma_1$ and
$\Gamma_2$ such that $\U(C^\ast(\Gamma_1\oplus \Z_2))$ and
$\U(C^\ast(\Gamma_2\oplus \Z_2))$ are topologically isomorphic.
\end{example}
Finally,
\begin{example}
Two Abelian groups $\Gamma_1$ and $\Gamma_2$ of different
torsion-free rank with $\UGI$ topologically isomorphic to
$\UGII$.\label{ejemplo}
\end{example}
\begin{proof}
Let ${\displaystyle \Gamma_1= \Z \oplus (\oplus_\omega \Z_2)}$ and
 ${\displaystyle \Gamma_2= \left(\Z\oplus \Z \right) \oplus (\oplus_\omega
 \Z_2)}$. The argument now is as in Example \ref{noinso}.
 \end{proof}
In the above example one can obviously replace $\Gamma_2$ by
$\left(\oplus_\omega  \Z \right) \oplus (\oplus_\omega
 \Z_2)$ and  have an example of two Abelian groups  with $\UGI$ topologically isomorphic to $\UGII$
 while the torsion-free rank of one of them is finite and the
 torsion-free rank of the other is infinite.
 \subsection{Invariants}\label{comp}
 The unitary group $\UG$ is an invariant of the group $\CG$, and as
 such can be compared with other well known unitary-related
 invariants, like for instance $K_1(\CG)$.
We can also mention here related   work of  Hofmann and Morris  on
free
 compact Abelian groups \cite{hoffmorr86}. This is  part of  a more general project of
 attaching a compact topological group $FC(X)$ to every compact
 Hausdorff space $X$. The  free compact Abelian group  on $X$ is
 constructed as
 the character group of  the \emph{discrete} group $C(X,\T)_d$. For an Abelian  group
 $\Gamma$, this process
 produces an  invariant of
 $\CG$,  namely the group
 $\UG_d$  equipped with the discrete topology. The
 character group of $\UG_d$ is precisely the free compact Abelian group on
 $\widehat{\Gamma}$. Being the same object but with no topology,
 this
 invariant is weaker  than  $\UG$. It is easy to see that it is indeed strictly weaker,
 simply take
  $\Gamma_1=\Q$ and
 $\Gamma_2=\oplus_\omega \Q$. In general there is a copy of the
 free Abelian group generated by $X$, densely embedded in $FC(X)$,
$ FC(X)$ is, actually (a realization of) the Bohr compactification
of the free Abelian topological group on $X$ (see
\cite{galihern99fo} for detailed references on free Abelian
topological groups and their duality properties). Since  two
topological spaces with topological isomorphic free Abelian
topological groups must have the same covering dimension
\cite{pest82}, Example \ref{rank} is somewhat unexpected.

The comparison with $K_1(\UG)$ is richer. As we saw in Section
\ref{sec:counter}, the group algebras $\CCG{1}$ and $\CCG{2}$ of two
nonisomorphic torsion-free Abelian groups $\Gamma_1$ and $\Gamma_2$
can have isomorphic $K_1$-groups, while their unitary groups must be
topologically isomorphic by Theorem \ref{index}. The opposite
direction  does not work either. We find next  two discrete groups
whose group $C^{\ast}$-algebras have isomorphic unitary groups while
their $K_1$-groups fail to be so. We first see that from Theorem
\ref{split} and with a simple application of the K\"unneth theorem,
the $K_1$-group of a group $C^\ast$-algebra depends exclusively on
its torsion-free component.
\begin{lemma}\label{kunn}
Let $\Gamma$ be an Abelian discrete group. Then
\[
K_1(C^{\ast}(\Gamma))\cong K_1(C^{\ast}(\Gamma/t\Gamma))
\]
\end{lemma}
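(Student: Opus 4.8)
The plan is to realize $C^\ast(\Gamma)$ as a tensor product whose torsion factor is $K$-theoretically transparent, and then run the Künneth theorem exactly as the authors advertise. First I would invoke the homeomorphic splitting \eqref{eq-torsion2} furnished by Theorem \ref{split}, namely $\widehat{\Gamma}\sim \widehat{t\Gamma}\times (t\Gamma)^{\perp}$. Since the annihilator $(t\Gamma)^{\perp}$ is the dual of $\Gamma/t\Gamma$, passing to algebras of continuous $\C$-valued functions turns this homeomorphism into a $C^\ast$-algebra isomorphism
\[ C^\ast(\Gamma)\cong C(\widehat{t\Gamma},\C)\otimes C(\widehat{\Gamma/t\Gamma},\C)=C^\ast(t\Gamma)\otimes C^\ast(\Gamma/t\Gamma). \]

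Next I would pin down the $K$-theory of the torsion factor. As $t\Gamma$ is a torsion discrete Abelian group, $\widehat{t\Gamma}$ is compact and totally disconnected, so $C^\ast(t\Gamma)=C(\widehat{t\Gamma},\C)$ is an AF-algebra (a direct limit of finite-dimensional algebras indexed by the finite subgroups of $t\Gamma$). Consequently $K_1(C^\ast(t\Gamma))=0$, while $K_0(C^\ast(t\Gamma))\cong C(\widehat{t\Gamma},\Z)$ is torsion-free. These two features are precisely what will make the Künneth sequence degenerate in the direction we need.

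I would then feed $A=C^\ast(t\Gamma)$ and $B=C^\ast(\Gamma/t\Gamma)$ into the Künneth short exact sequence, whose left-hand term in degree $1$ is $\bigl(K_0(A)\otimes K_1(B)\bigr)\oplus\bigl(K_1(A)\otimes K_0(B)\bigr)$ and whose right-hand term is the degree-$0$ part of $\mathrm{Tor}(K_\ast(A),K_\ast(B))$. Because $K_0(A)$ is torsion-free every $\mathrm{Tor}$ term vanishes, and because $K_1(A)=0$ the only surviving tensor summand is $K_0(A)\otimes K_1(B)$. Hence the sequence collapses to the isomorphism
\[ K_1(C^\ast(\Gamma))\cong K_0(C^\ast(t\Gamma))\otimes K_1(C^\ast(\Gamma/t\Gamma)). \]

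The main obstacle is the final identification of this tensor product with $K_1(C^\ast(\Gamma/t\Gamma))$ itself, i.e. the step that actually isolates the assertion that the torsion subgroup contributes nothing to $K_1$. The clean way to close it is to argue that the relevant tensor factor $K_0(C^\ast(t\Gamma))$ acts as the identity on $K_1(C^\ast(\Gamma/t\Gamma))$, which is where one must be careful about the precise contribution of the totally disconnected spectrum $\widehat{t\Gamma}$; everything preceding this point is the promised \emph{simple application} of Künneth. I therefore expect the bookkeeping around $K_0(C^\ast(t\Gamma))$ to be the delicate part of the proof, and the place where the structure of $t\Gamma$ really enters.
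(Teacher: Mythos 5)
Your route is exactly the paper's: split $C^{\ast}(\Gamma)\cong C^{\ast}(t\Gamma)\otimes C^{\ast}(\Gamma/t\Gamma)$ via Theorem \ref{split} and run the K\"unneth sequence, which degenerates because $K_1(C^{\ast}(t\Gamma))=0$ and $K_0(C^{\ast}(t\Gamma))$ is torsion-free. Everything up to the isomorphism $K_1(C^{\ast}(\Gamma))\cong K_0(C^{\ast}(t\Gamma))\otimes K_1(C^{\ast}(\Gamma/t\Gamma))$ is correct and, notably, you compute the torsion factor correctly as $K_0(C^{\ast}(t\Gamma))\cong C(\widehat{t\Gamma},\Z)$.

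The step you flag as ``the delicate part'' and leave open is a genuine gap, and it cannot be closed: $C(\widehat{t\Gamma},\Z)$ is a free Abelian group of rank $|t\Gamma|$ when $t\Gamma$ is finite and of countably infinite rank when $t\Gamma$ is infinite (then $\widehat{t\Gamma}$ is a Cantor set), so it equals $\Z$ only when $t\Gamma$ is trivial. There is no ``identity action'' to invoke; the tensor product is a direct sum of that many copies of $K_1(C^{\ast}(\Gamma/t\Gamma))$. The paper's own proof closes this step by asserting that $K_0(C(D))=\Z$ for an infinite totally disconnected compact group $D$, which is an error (the correct value is $C(D,\Z)$), and in fact the lemma as stated is false: for $\Gamma=\Z\oplus\Z_2$ one has $C^{\ast}(\Gamma)\cong C(\T)\oplus C(\T)$, hence $K_1(C^{\ast}(\Gamma))\cong\Z^2$, while $K_1(C^{\ast}(\Gamma/t\Gamma))=K_1(C^{\ast}(\Z))\cong\Z$. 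The honest conclusion of your computation is $K_1(C^{\ast}(\Gamma))\cong C(\widehat{t\Gamma},\Z)\otimes K_1(C^{\ast}(\Gamma/t\Gamma))$, which reduces to the claimed statement only when $t\Gamma$ is trivial or $K_1(C^{\ast}(\Gamma/t\Gamma))=0$; so your instinct about where the difficulty lies was exactly right, and the difficulty is fatal to the statement, not to your argument.
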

\begin{proof}
From Theorem \ref{split}, $\widehat{\Gamma}$ is homeomorphic to
$\widehat{\Gamma}/\widehat{\Gamma}_0\times \widehat{\Gamma}_0$,
where $\widehat{\Gamma}/\widehat{\Gamma}_0\cong \widehat{t\Gamma}$
and $\widehat{\Gamma}_0\cong  t\Gamma^{\perp} \cong
\widehat{\Gamma/t\Gamma}$. Therefore,
\begin{equation}
C^{\ast}(\Gamma)\cong C^{\ast}(t\Gamma)\otimes
C^{\ast}(\Gamma/t\Gamma).\label{eq-kunn}
\end{equation}
Applying the Künneth formula to (\ref{eq-kunn}), we obtain,
\begin{eqnarray*}
K_1(C^{\ast}(\Gamma))&\cong & K_1(C^{\ast}(t\Gamma)\otimes
C^{\ast}(\Gamma/t\Gamma))\\
&\cong & K_0(C^{\ast}(t\Gamma))\otimes
K_1(C^{\ast}(\Gamma/t\Gamma))\oplus K_1(C^{\ast}(t\Gamma))\otimes
K_0(C^{\ast}(\Gamma/t\Gamma))\\
&\cong & \mathbb{Z}\otimes K_1(C^{\ast}(\Gamma/t\Gamma))\cong
K_1(C^{\ast}(\Gamma/t\Gamma)),
\end{eqnarray*}
since $K_0(C(D))=\mathbb{Z}$ and $K_1(C(D))=0$ for a infinite
totally disconnected compact group $D$.
\end{proof}

\begin{example}
Two Abelian groups $\Gamma_1$ and $\Gamma_2$ whose group
$C^{\ast}$-algebras have topologically isomorphic unitary groups,
whereas their $K_1$-groups are nonisomorphic.
\end{example}
\begin{proof}
Take $\Gamma_1$ and $\Gamma_2$ from Example \ref{ejemplo}. Applying
Lemma \ref{kunn} and Lemma \ref{kgrupo-ext}, we have that
\[
K_1(\CCG{1})\cong K_1(C^{\ast}(\mathbb{Z}))\cong\mathbb{Z}\quad
\text{ and } \quad K_1(\CCG{2})\cong K_1(C^{\ast}(\mathbb{Z\oplus
\Z}))\cong \Z \oplus\Z.
\]
The topological groups $\mathcal{U}(\CCG{1})$ and
$\mathcal{U}(\CCG{2})$ are topologically isomorphic as  was proved
in Example \ref{ejemplo}.
\end{proof}

As a consequence,  we see  that none of the invariants
$\mathcal{U}(C^{\ast}(\Gamma))$ and  $K_1(C^{\ast}(\Gamma))$, of  a
group algebra $C^{\ast}(\Gamma)$ is stronger than the other. The
groups in Theorem  \ref{counter} also show that two nonisometric
(Abelian) $C^\ast$-algebras can have topologically  isomorphic
unitary groups \emph{and} isomorphic $K_1$-groups. Take $\Phi_i
=\Delta_i\times \Z_2$ with $\Delta_i$ defined as in Theorem
\ref{counter}. The same argument of Example \ref{noinso} shows that
$\mathcal{U}(C^\ast(\Delta_i))\cong C(\Phi_i,\R)\times \T^2\times
\Delta_i \times \Delta_i$  and,  hence, that
$\mathcal{U}(C^\ast(\Phi_1))\cong \mathcal{U}(C^\ast(\Phi_2))$. To
see that $K_1(C^\ast(\Phi_1))\cong K_1(C^\ast(\Phi_2))$ simply note
that, by Lemma \ref{kunn}, $K_1(C^\ast(\Phi_i))\cong
K_1(C^\ast(\Delta_i))$ and that $K_1(C^\ast(\Delta_1))\cong
K_1(C^\ast(\Delta_2))$ by Theorem \ref{counter}.

\def\cprime{$'$} \def\cprime{$'$}
  \def\polhk#1{\setbox0=\hbox{#1}{\ooalign{\hidewidth
  \lower1.5ex\hbox{`}\hidewidth\crcr\unhbox0}}}
  \def\polhk#1{\setbox0=\hbox{#1}{\ooalign{\hidewidth
  \lower1.5ex\hbox{`}\hidewidth\crcr\unhbox0}}} \def\cprime{$'$}

\end{document}